\begin{document}

\title{\bf  \Large Inexact Newton regularization methods in Hilbert scales
}

\titlerunning{Short form of title}        

\author{Qinian Jin         \and
    Ulrich Tautenhahn 
}

\authorrunning{Short form of author list} 

\institute{Qinian Jin \at
 Department of Mathematics, Virginia Tech, Blacksburg, VA 24060, USA\\
\email{qnjin@math.vt.edu}
           \and
Ulrich Tautenhahn \at
Department of Mathematics, University of Applied Sciences Zittau/G\"{o}rlitz, PO Box 1454,
02754 Zittau, Germany\\
          \email{u.tautenhahn@hs-zigr.de}
}


\newtheorem{Assumption}{Assumption}

\def\A{\mathcal A}
\def\B{\mathcal B}

\def\d{\delta}
\def\ds{\displaystyle}
\def\e{{\epsilon}}
\def\eb{\bar{\eta}}
\def\enorm#1{\|#1\|_2}
\def\Fp{F^\prime}
\def\fishpack{{FISHPACK}}
\def\fortran{{FORTRAN}}
\def\gmres{{GMRES}}
\def\gmresm{{\rm GMRES($m$)}}
\def\Kc{{\cal K}}
\def\norm#1{\|#1\|}
\def\wb{{\bar w}}
\def\zb{{\bar z}}

\def\B{\mathcal B}
\def\A{\mathcal A}
\def\a{\alpha}
\def\b{\beta}
\def\d{\delta}
\def\la{\lambda}

\maketitle

\begin{abstract}
We consider a class of inexact Newton regularization methods for solving nonlinear
inverse problems in Hilbert scales. Under certain conditions we obtain the order
optimal convergence rate result.


\end{abstract}

\def\theequation{\thesection.\arabic{equation}}
\catcode`@=12

\section{\bf Introduction}
\setcounter{equation}{0}

In this paper we consider the nonlinear inverse problems
\begin{equation}\label{1}
F(x)=y,
\end{equation}
where $F: D(F)\subset
X\mapsto Y$ is a nonlinear Fr\'{e}chet differentiable operator
between two Hilbert spaces $X$ and $Y$ whose norms and inner
products are denoted as $\|\cdot\|$ and $(\cdot, \cdot)$
respectively. We assume that (\ref{1}) has a solution $x^\dag$
in the domain $D(F)$ of $F$, i.e. $F(x^\dag)=y$. We use
$F'(x)$ to denote the Fr\'{e}chet derivative of $F$ at $x\in D(F)$
and $F'(x)^*$ the adjoint of $F'(x)$.
A characteristic property of such problems is
their ill-posedness in the sense that their solutions do not
depend continuously on the data. Let $y^\d$ be the only available
approximation of $y$ satisfying
\begin{equation}\label{1.2}
\|y^\delta-y\|\le \delta
\end{equation}
with a given small noise level $\delta> 0$. Due to the
ill-posedness, the regularization techniques should be employed to
produce from $y^\d$ a stable approximate solution of (\ref{1}).

Many regularization methods have been considered in the last two
decades. In particular, the nonlinear Landweber iteration \cite{HNS96},
the Levenberg-Marquardt method \cite{H97,Jin10a},
and the exponential Euler iteration \cite{HHO09} have been applied to
solve nonlinear inverse problems. These methods take the form
\begin{equation}\label{m1}
x_{n+1} =x_n-g_{\alpha_n} \left(F'(x_n)^*F'(x_n)\right) F'(x_n)^*
\left(F(x_n)-y^\d\right),
\end{equation}
where $x_0$ is an initial guess of $x^\dag$, $\{\a_n\}$ is a sequence of
positive numbers, and $\{g_\a\}$ is a family of spectral filter functions.
The scheme (\ref{m1}) can be derived by applying the linear regularization
method defined by $\{g_\a\}$ to the equation
\begin{equation}\label{n1}
F'(x_n)(x-x_n)=y^\delta-F(x_n).
\end{equation}
which follows from (\ref{1}) by replacing $y$ by $y^\d$ and $F(x)$ by
its linearization $F(x_n)+F'(x_n)(x-x_n)$ at $x_n$. It is easy to see that
$$
F(x_n)-y^\d+F'(x_n)(x_{n+1}-x_n)=r_{\a_n}(F'(x_n)F'(x_n)^*) (F(x_n)-y^\d),
$$
where
\begin{equation}\label{r1}
r_{\a}(\lambda)=1-\lambda g_\a(\lambda)
\end{equation}
which is called the residual function associated with $g_\a$.
For well-posed problems where $F'(x_n)$ is invertible, usually one has
$\|r_{\a_n}(F'(x_n)F'(x_n)^*)\|\le \mu_n<1$ and consequently
\begin{equation}\label{m2}
\|F(x_n)-y^\d+F'(x_n)(x_{n+1}-x_n)\|\le \mu_n \|F(x_n)-y^\d\|.
\end{equation}
Thus the methods belong to the class of inexact Newton methods \cite{DES82}.
For ill-posed problems, however, there only holds $\|r_{\a_n}(F'(x_n)F'(x_n)^*)\|\le 1$
in general. In \cite{H97} the Levenberg-Marquardt scheme was considered
with $\{\a_n\}$ chosen adaptively so that (\ref{m2}) holds and
the discrepancy principle was used to terminate the iteration. The order optimal
convergence rates were derived recently in \cite{H2010}.
The general methods (\ref{m1}) with $\{\a_n\}$ chosen adaptively to satisfy
(\ref{m2}) were considered later in \cite{R99,LR2010}, but only suboptimal
convergence rates were derived in \cite{R01} and the convergence analysis
is far from complete. On the other hand, one may consider the method
(\ref{m1}) with $\{\a_n\}$ given a priori. This has been done for the Levenberg-Marquardt
method in \cite{Jin10a} and the exponential Euler method in \cite{HHO09} for instance.

In this paper we will consider the inexact Newton methods in
Hilbert scales which are more general than (\ref{m1}). Let
$L$ be a densely defined self-adjoint strictly positive linear
operator in $X$.  For each $r\in {\mathbb R}$,
we define $X_r$ to be the completion of $\cap_{k=0}^\infty D(L^k)$
with respect to the Hilbert space norm
$$
\|x\|_r:= \|L^r x\|.
$$
This family of Hilbert spaces $(X_r)_{r\in {\mathbb R}}$ is called the
Hilbert scales generated by $L$. Let $x_0\in D(F)$ be an initial guess of $x^\dag$.
The inexact Newton method in Hilbert scales defines the iterates
$\{x_n\}$ by
\begin{equation}\label{2}
x_{n+1}=x_n- g_{\a_n}\left(L^{-2s}F'(x_n)^* F'(x_n)\right) L^{-2s} F'(x_n)^* (F(x_n)-y^\d),
\end{equation}
where $s\in {\mathbb R}$ is a given number to be specified later, and
$\{\a_n\}$ is an a  priori given sequence of positive numbers with suitable properties.
We will terminate the iteration by the discrepancy principle
\begin{equation}\label{DP}
\|F(x_{n_\d})-y^\d\|\le \tau \d <\|F(x_n)-y^\d\|, \quad 0\le n<n_\d
\end{equation}
with a given number $\tau>1$ and consider the approximation property
of $x_{n_\d}$ to $x^\dag$ as $\d\rightarrow 0$. We will establish for a large class of
spectral filter functions $\{g_\a\}$ the order optimal convergence rates
for the method defined by (\ref{2}) and (\ref{DP}).

Regularization in Hilbert scales has been introduced in \cite{Na1984}
for the linear Tikhonov regularization with the major aim to prevent
the saturation effect. Such technique has been extended in various ways,
in particular, a general class of regularization methods in Hilbert scales has been
considered in \cite{Tau1996} with the regularization parameter chosen
by the Morozov's discrepancy principle. Regularization in Hilbert scales
have also been applied for solving nonlinear ill-posed problems. The nonlinear
Tikhonov regularization in Hilbert scales has been considered in
\cite{KT95,EHN96}, a general continuous regularization scheme for nonlinear
problems in Hilbert scales has been considered in \cite{Tau98}, the general
iteratively regularized Gauss-Newton methods in Hilbert scales has been considered in
\cite{Jin2000}, and the nonlinear Landweber iteration in Hilbert scales
has been considered in \cite{N2000}.

This paper is organized as follows. In Section 2 we first briefly
review the relevant properties of Hilbert scales, and then formulate
the necessary condition on $\{\a_n\}$, $\{g_\a\}$ and $F$ together
with some crucial consequences. In Section 3 we obtain the main
result concerning the order optimal convergence property of the
method given by (\ref{2}) and (\ref{DP}). Finally we present in
Section 4 several examples of the method (\ref{2}) for which
$\{g_\a\}$ satisfies the technical conditions in Section 2.

\section{\bf Assumptions}
\setcounter{equation}{0}

We first briefly review the relevant properties of the Hilbert scales
$(X_r)_{r\in {\mathbb R}}$ generated by a densely defined self-adjoint
strictly positive linear operator $L$ in $X$, see \cite{EHN96}.
It is well known that $X_r$ is densely
and continuously embedded into $X_q$ for any $-\infty<q<r<\infty$, i.e.
\begin{equation}\label{1000}
\|x\|_q\le \theta^{r-q}\|x\|_r, \quad x\in X_r,
\end{equation}
where $\theta>0$ is a constant such that
\begin{equation}\label{1111}
\|x\|^2\le \theta (L x, x), \quad x\in D(L).
\end{equation}
Moreover there holds the important interpolation
inequality, i.e. for any $-\infty<p<q<r<\infty$ there holds for any $x\in X_r$ that
\begin{equation}\label{2.1}
\|x\|_q\le \|x\|_p^{\frac{r-q}{r-p}}\|x\|_r^{\frac{q-p}{r-p}}.
\end{equation}
Let $T:X\mapsto Y$ be a bounded linear operator satisfying
$$
m\|h\|_{-a}\le \|Th\|\le M\|h\|_{-a}, \quad h\in X
$$
for some constants $M\ge m>0$ and  $a\ge 0$. Then the operator
$A:=TL^{-s}:X\mapsto Y$ is bounded for $s\ge -a$ and the adjoint
of $A$ is given by $A^*=L^{-s}T^*$, where $T^*:Y\mapsto X$ is the
adjoint of $T$. Moreover, for any $|\nu|\le 1$ there hold
\begin{equation}\label{2.5}
R((A^*A)^{\nu/2})=X_{\nu(a+s)}
\end{equation}
and
\begin{equation}\label{2.3}
 \underline{c}(\nu) \|h\|_{-\nu(a+s)}\le\|(A^*A)^{\nu/2}h\| \le
\overline{c}(\nu) \|h\|_{-\nu(a+s)}
\end{equation}
on $D((A^*A)^{\nu/2})$, where
$$
\underline{c}(\nu):=\min\{m^\nu,M^\nu\} \quad \mbox{and} \quad
\overline{c}(\nu)=\max\{m^\nu, M^\nu\}.
$$
If $g:[0,\|A\|^2]\mapsto {\mathbb R}$ is a continuous function, then
\begin{equation}\label{2.2}
g(A^*A)L^s=L^sg(L^{-2s}T^*T).
\end{equation}

In order to carry out the convergence analysis on the method
defined by (\ref{2}) and (\ref{DP}), we need to impose
suitable conditions on $\{\a_n\}$, $\{g_\a\}$ and $F$.
For the sequence $\{\a_n\}$ of positive numbers, we set
\begin{equation}\label{a1}
s_{-1}=0, \qquad s_n:=\sum_{j=0}^n \frac{1}{\a_j}, \qquad n=0, 1,\cdots.
\end{equation}
We will assume that there are constants $c_0>1$ and $c_1>0$ such that
\begin{equation}\label{60}
\lim_{n \rightarrow \infty} s_n =\infty, \quad
s_{n+1}\le c_0 s_n \quad \mbox{and} \quad 0<\a_n \le c_1, \quad n=0, 1,\cdots.
\end{equation}
We will also assume that, for each $\a>0$, the function $g_\a$ is defined
on $[0,1]$ and satisfies the following structure condition,
where ${\mathbb C}$ denotes the complex plane.

\begin{Assumption}\label{A4}
For each $\a>0$, the function
$$
\varphi_\a(\lambda):=g_\a(\lambda)-\frac{1}{\a+\lambda}
$$
extends to a complex analytic
function defined on a domain $D_\a\subset {\mathbb C}$ such that
$[0, 1]\subset D_\a$, and there is a contour $\Gamma_\a\subset
D_\a$ enclosing $[0, 1]$ such that
\begin{equation}\label{2.6}
|z|\ge \frac{1}{2}\a \quad \mbox{and} \quad \frac{|z|+\la}{|z-\la|}\le b_0, \qquad \forall z \in
\Gamma_\a, \, \a>0 \mbox{ and } \la \in [0,1],
\end{equation}
where $b_0$ is a constant independent of $\a>0$.
Moreover, there is a constant $b_1$ such that
\begin{equation}\label{2.8}
\int_{\Gamma_\a} \left|\varphi_\a(z)\right| |d z|\le b_1
\end{equation}
for all $0<\a\le c_1$.
\end{Assumption}

By using the spectral integrals for self-adjoint operators, it follows easily from
(\ref{2.6}) in Assumption \ref{A4} that for any bounded
linear operator $A$ with $\|A\|\le 1$ there holds
\begin{equation}\label{200}
\|(z I-A^*A)^{-1}(A^*A)^\nu\|\le \frac{b_0}{|z|^{1-\nu}}
\end{equation}
for $z \in \Gamma_\a$ and $0\le \nu\le 1$.

Moreover, since Assumption \ref{A4} implies $\varphi_\a(z)$ is analytic
in $D_\a$ for each $\a>0$, there holds the Riesz-Dunford formula (see \cite{BK04})
$$
\varphi_\a(A^*A)=\frac{1}{2\pi i} \int_{\Gamma_\a} \varphi_\a(z) (z
I-A^*A)^{-1} d z
$$
for any linear operator $A$ satisfying $\|A\|\le 1$.

\begin{Assumption}\label{A5}
Let $\{\a_n\}$ be a sequence of positive numbers,
let $\{s_n\}$ be defined by (\ref{a1}). There is a constant $b_2>0$ such that
\begin{align}
0\le \la^\nu \prod_{k=j}^n r_{\a_k}(\la)&\le (s_n-s_{j-1})^{-\nu},  \label{g1}\\
0\le \la^\nu g_{\a_j}(\la) \prod_{k=j+1}^n r_{\a_k}(\la)&\le b_2
\frac{1}{\a_j} (s_n-s_{j-1})^{-\nu} \label{g2}
\end{align}
for $0\le \nu\le 1$,  $0\le \la \le 1$ and $j=0, 1, \cdots, n$, where
$r_\a(\lambda)$ is defined by (\ref{r1}).
\end{Assumption}

In Section \ref{Sect4} we will give several important examples of
$\{g_\a\}$ satisfying Assumptions \ref{A4} and \ref{A5}. These
examples of $\{g_\a\}$ include the ones arising from (iterated)
Tikhonov regularization, asymptotical regularization, Landweber
iteration and Lardy method.

\begin{lemma}\label{L20}
The inequality (\ref{g1}) implies for $0\le \nu\le 1$ and $\a>0$ that
\begin{equation}
0\le \la^{\nu} (\a+\la)^{-1} \prod_{k=j+1}^n r_{\a_k}(\la)
\le 2 \a^{\nu-1} \left(1+\a (s_n-s_j)\right)^{-\nu} \label{g3}
\end{equation}
for all $0\le \la\le 1$ and $j=0, 1, \cdots, n$.
\end{lemma}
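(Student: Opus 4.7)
The plan is to reduce the bound to two elementary estimates for $\la^\nu/(\a+\la)$ and then case-split on whether $\a(s_n-s_j)$ is $\le 1$ or $>1$. The key preliminary observation is that for $0\le\nu\le 1$ and $\a>0$, we may factor
$$
\frac{\la^\nu}{\a+\la}=\left(\frac{\la}{\a+\la}\right)^{\!\nu}(\a+\la)^{\nu-1}\le \a^{\nu-1},
$$
since $\la/(\a+\la)\le 1$ and $(\a+\la)^{\nu-1}\le \a^{\nu-1}$ as $\nu-1\le 0$. We will also use the shifted form of (\ref{g1}): replacing $j$ by $j+1$ yields
$$
0\le \la^\nu \prod_{k=j+1}^n r_{\a_k}(\la) \le (s_n-s_j)^{-\nu},
$$
and in particular, setting $\nu=0$, the product itself is bounded by $1$, which is what gives the nonnegativity and also the bound $0\le r_{\a_k}(\la)\le 1$.

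For the quantitative upper bound I would distinguish two cases. If $\a(s_n-s_j)\le 1$, I would discard the product using $\prod r_{\a_k}(\la)\le 1$ and apply the factoring above to get
$$
\la^\nu(\a+\la)^{-1}\prod_{k=j+1}^n r_{\a_k}(\la)\le \a^{\nu-1},
$$
while $(1+\a(s_n-s_j))^{-\nu}\ge 2^{-\nu}$ gives $2\a^{\nu-1}(1+\a(s_n-s_j))^{-\nu}\ge 2^{1-\nu}\a^{\nu-1}\ge \a^{\nu-1}$, which closes this case. If instead $\a(s_n-s_j)>1$, I would use $(\a+\la)^{-1}\le \a^{-1}$ and the shifted (\ref{g1}) to estimate
$$
\la^\nu(\a+\la)^{-1}\prod_{k=j+1}^n r_{\a_k}(\la)\le \a^{-1}(s_n-s_j)^{-\nu},
$$
while $1+\a(s_n-s_j)<2\a(s_n-s_j)$ gives $(1+\a(s_n-s_j))^{-\nu}>2^{-\nu}\a^{-\nu}(s_n-s_j)^{-\nu}$, so $2\a^{\nu-1}(1+\a(s_n-s_j))^{-\nu}>2^{1-\nu}\a^{-1}(s_n-s_j)^{-\nu}\ge \a^{-1}(s_n-s_j)^{-\nu}$, which again dominates the left-hand side.

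There is no real obstacle here; the whole content is choosing the right bound for $(\a+\la)^{-1}$ in each regime (by $\a^{-1}$ when $s_n-s_j$ is large, and by the pointwise estimate $\la^\nu/(\a+\la)\le\a^{\nu-1}$ when it is small) and checking that the factor $2$ on the right absorbs the constant $2^{1-\nu}$ that arises from comparing $1+\a(s_n-s_j)$ with $2\max\{1,\a(s_n-s_j)\}$. The only place where one must be slightly careful is to apply (\ref{g1}) with shifted index $j+1$ rather than $j$, so that $s_n-s_{(j+1)-1}=s_n-s_j$ matches the exponent in the target inequality.
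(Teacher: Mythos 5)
Your proof is correct and follows essentially the same route as the paper: both arguments reduce to the two elementary bounds $\a^{\nu-1}$ (from $\la^\nu(\a+\la)^{-1}\le\a^{\nu-1}$ and the product being at most $1$) and $\a^{-1}(s_n-s_j)^{-\nu}$ (from $(\a+\la)^{-1}\le\a^{-1}$ and the index-shifted (\ref{g1})), and then compare their minimum with $(1+\a(s_n-s_j))^{-\nu}$. The paper phrases this as the single inequality $\min\{1,t^{-\nu}\}\le 2^\nu(1+t)^{-\nu}$ rather than your explicit case split on $\a(s_n-s_j)\lessgtr 1$, but the content is identical.
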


\begin{proof}
For $0\le \nu\le 1$ and $\a>0$ it follows from (\ref{g1}) that
\begin{align*}
0\le \la^{\nu} (\a+\la)^{-1} \prod_{k=j+1}^n r_{\a_k}(\la)
& \le \min\left\{ \a^{\nu-1}, \a^{-1} (s_n-s_j)^{-\nu}\right\}\\
&= \a^{\nu-1} \min\left\{1, \a^{-\nu} (s_n-s_j)^{-\nu}\right\}\\
&\le 2^\nu  \a^{\nu-1} \left(1+\a (s_n-s_j)\right)^{-\nu} \label{g3}
\end{align*}
for all $0\le \la\le 1$ and $j=0, 1, \cdots, n$. \hfill $\Box$
\end{proof}

\begin{Assumption}\label{A1}
(a) There exist constants $a\ge 0$ and $0<m\le M<\infty$ such that
$$
m\|h\|_{-a}\le \|F'(x) h\|\le M \|h\|_{-a}, \quad h\in X
$$
for all $x\in B_\rho(x^\dag)$.

(b) $F$ is properly scaled so that $\|F'(x) L^{-s}\|_{X\to Y}
\le \min\{1,\sqrt{\a_0}\}$ for all $x\in B_\rho(x^\dag)$, where $s\ge -a$.

(c) There exist $0<\beta\le 1$, $0\le b\le a$ and $K_0\ge 0$ such that
\begin{equation}\label{4}
\|F'(x)^*-F'(x^\dag)^*\|_{Y\to X_b}\le K_0\|x-x^\dag\|^\beta
\end{equation}
for all $x\in B_\rho(x^\dag)$.

\end{Assumption}

The number $a$ in condition (a) can be interpreted as the
degree of ill-posedness of $F'(x)$ for $x\in B_\rho(x^\dag)$.
When $F$ satisfies the condition
\begin{equation}\label{A10}
F'(x)=R_x F'(x^\dag) \quad \mbox{and} \quad \|I-R_x\|\le K_0\|x-x^\dag\|,
\end{equation}
which has been verified in \cite{HNS96} for several nonlinear inverse problems,
condition (a) is equivalent to
$$
m\|h\|_{-a}\le \|F'(x^\dag) h\|\le M \|h\|_{-a}, \quad h\in X
$$
From (a) and (\ref{1000}) it follows for $s\ge -a$ that
$\|F'(x) L^{-s}\|_{X\to Y}\le M \theta^{a+s}$ for all $x\in B_\rho(x^\dag)$. Thus
$\|F'(x)L^{-s}\|_{X\to Y}$ is uniformly bounded over $B_\rho(x^\dag)$. By
multiplying (\ref{1}) by a sufficiently small number, we may assume that
$F$ is properly scaled so that condition (b) is satisfied. Furthermore,
condition (a) implies that $F'(x)^*$ maps $Y$ into $X_b$ for $b\le a$ and
$\|F'(x)^*\|_{Y\to X_b}\le M \theta^{a-b}$ for all $x\in B_\rho(x^\dag)$.
Condition (c) says that $F'(x)^*$ is locally H\"{o}lder
continuous around $x^\dag$ with exponent $0<\beta\le 1$
when considered as operators from $Y$ to $X_b$. It is equivalent to
$$
\|L^b[F'(x)^*-F'(x^\dag)^*]\|_{Y\to X}\le K_0\|x-x^\dag\|^\beta, \quad x\in B_\rho(x^\dag)
$$
or
$$
\|[F'(x)-F'(x^\dag)] L^b\|_{X\to Y}\le K_0\|x-x^\dag\|^\beta, \quad x\in B_\rho(x^\dag).
$$
Condition (c) was used first in \cite{N2000} for the convergence
analysis of Landweber iteration in Hilbert scales.
It is easy to see that when $b=0$ and $\beta=1$,
this is exactly the Lipschitz condition on $F'(x)$. When $F$ satisfies (\ref{A10}),
(c) holds with $b=a$ and $\beta=1$. In \cite{N2000} it has been shown that
(c) implies
\begin{equation}\label{5.9.3}
\|F(x)-y-F'(x^\dag)(x-x^\dag)\|\le K_0\|x-x^\dag\|^\beta \|x-x^\dag\|_{-b}
\end{equation}
which follows easily from the identity
$$
F(x)-y-F'(x^\dag)(x-x^\dag)=\int_0^1 \left[F'(x^\dag+t(x-x^\dag))-F'(x^\dag)\right] L^b L^{-b} (x-x^\dag) dt.
$$

In this paper we will derive, under the above assumptions on $\{\a_n\}$,
$\{g_\a\}$ and $F$, the rate of convergence of $x_{n_\d}$ to $x^\dag$ as
$\delta\rightarrow 0$ when $e_0:=x_0-x^\dag$ satisfies the smoothness condition
\begin{equation}\label{33}
x_0-x^\dag \in X_\mu \quad \mbox{with } \frac{a-b}{\beta}<\mu\le b+2s,
\end{equation}
where $n_\d$ is the integer determined by the discrepancy
principle (\ref{DP}) with $\tau>1$.

The following consequence of the above assumptions on $F$ and $\{g_\a\}$ plays a crucial
role in the convergence analysis.

\begin{lemma}\label{L10}
Let $\{g_\a\}$ satisfy Assumptions \ref{A4} and \ref{A5}, let
$F$ satisfy Assumption \ref{A1}, and let $\{\a_n\}$ be a sequence of positive numbers.
Let $A=F'(x^\dag) L^{-s}$ and for any $x\in B_\rho(x^\dag)$ let $A_x=F'(x) L^{-s}$.
Then for $-\frac{b+s}{2(a+s)} \le \nu\le 1/2$ there holds
\begin{footnote}
{Throughout this paper we will always use $C$ to denote a generic
constant independent of $\d$ and $n$. We will also use the
convention $\Phi\lesssim \Psi$ to mean that $\Phi\le C \Psi$ for
some generic constant $C$.}
\end{footnote}
\begin{align*}
&\left\| (A^*A)^\nu \prod_{k=j+1}^n r_{\a_k}(A^*A) \left[ g_{\a_j}(A^*A)A^*
-g_{\a_j}(A_x^*A_x)A_x^*\right] \right\| \nonumber\\
&\qquad\qquad \qquad\qquad\qquad\qquad
\lesssim \frac{1}{\a_j} (s_n-s_{j-1})^{-\nu-\frac{b+s}{2(a+s)}} K_0\|x-x^\dag\|^\beta
\end{align*}
for $j=0, 1, \cdots, n$.
\end{lemma}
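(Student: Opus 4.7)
My plan is to decompose
\[
E := g_{\a_j}(A^*A)A^* - g_{\a_j}(A_x^*A_x)A_x^*
\]
into pieces each containing exactly one ``difference factor'' ($A^* - A_x^*$ or $A - A_x$) and then bound each piece by combining the spectral decay of Assumption \ref{A5} (and Lemma \ref{L20}) with the H\"older estimate of Assumption \ref{A1}(c). First I telescope
\[
E = g_{\a_j}(A^*A)(A^* - A_x^*) + [g_{\a_j}(A^*A) - g_{\a_j}(A_x^*A_x)]A_x^* =: E_1 + E_2,
\]
and then apply the splitting $g_{\a_j} = \varphi_{\a_j} + (\a_j + \cdot)^{-1}$ from Assumption \ref{A4} inside $E_2$: the $\varphi$-part becomes a Riesz--Dunford integral over $\Gamma_{\a_j}$, and the Tikhonov part opens up via $X^{-1} - Y^{-1} = X^{-1}(Y-X)Y^{-1}$. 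In both subcases the new operator produced is $A^*A - A_x^*A_x = A^*(A - A_x) + (A^* - A_x^*)A_x$, still linear in a single difference.

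Set $\eta := (b+s)/(2(a+s))$. Since $0 \le b \le a$, one has $|2\eta| \le 1$, so the norm equivalence \eqref{2.3} yields $\|L^{b+s}w\| \asymp \|(A^*A)^{-\eta}w\|$ on the relevant subspace. Combined with $\|L^{b+s}(A^* - A_x^*)\|_{Y\to X} \le K_0\|x-x^\dag\|^\beta$ from Assumption \ref{A1}(c), this gives the key Hilbert-scale gain
\[
\|(A^*A)^{-\eta}(A^* - A_x^*)\|_{Y\to X} \lesssim K_0\|x-x^\dag\|^\beta,
\]
together with its dual $\|(A - A_x)(A^*A)^{-\eta}\|_{X\to Y} \lesssim K_0\|x-x^\dag\|^\beta$. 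Wherever a difference factor occurs I absorb $(A^*A)^\eta$ against it; this single mechanism produces the extra exponent $\eta$ in $(s_n - s_{j-1})^{-\nu - \eta}$.

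For $E_1$ the assembly is direct: write
\[
(A^*A)^\nu \prod_{k=j+1}^n r_{\a_k}(A^*A)\, E_1 = \Bigl[(A^*A)^{\nu+\eta} g_{\a_j}(A^*A)\prod_{k=j+1}^n r_{\a_k}(A^*A)\Bigr] \cdot (A^*A)^{-\eta}(A^* - A_x^*).
\]
The hypothesis $\nu \in [-\eta, 1/2]$ forces $\nu + \eta \in [0, 1]$, so \eqref{g2} bounds the bracket by $b_2\a_j^{-1}(s_n - s_{j-1})^{-(\nu + \eta)}$, and the Hilbert-scale gain controls the remaining factor. For $E_2$ the Tikhonov subpiece is handled by Lemma \ref{L20} applied with exponent $\nu + \eta$, using the identity $1 + \a_j(s_n - s_j) = \a_j(s_n - s_{j-1})$ to rewrite its output in the form $\a_j^{-1}(s_n - s_{j-1})^{-(\nu+\eta)}$. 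The Riesz--Dunford subpiece is handled analogously: \eqref{200} gives $\|(A^*A)^{\nu+\eta}(zI - A^*A)^{-1}\| \le b_0|z|^{\nu+\eta - 1}$ for $z \in \Gamma_{\a_j}$, a Lemma~\ref{L20}--style min argument combining this with \eqref{g1} and $|z| \ge \a_j/2$ yields $\|(A^*A)^{\nu+\eta}(zI - A^*A)^{-1}\prod r_{\a_k}(A^*A)\| \lesssim \a_j^{-1}(s_n - s_{j-1})^{-(\nu+\eta)}$, and the contour integral is absolutely convergent by \eqref{2.8}.

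The main obstacle is the bookkeeping in $E_2$: the three output factors $\a_j^{-1}$, $(s_n - s_{j-1})^{-\nu - \eta}$ and $K_0\|x-x^\dag\|^\beta$ must emerge simultaneously while $(A^*A)^{\nu+\eta}\prod r_{\a_k}(A^*A)$ is moved across non-commuting resolvents and the outside factor $A_x^*$. Repeated use of the push-through identity $h(A^*A)A^* = A^* h(AA^*)$, together with the bound $\|A_x\| \le 1$ from Assumption \ref{A1}(b), keeps every intermediate norm uniformly bounded in $z \in \Gamma_{\a_j}$ and in $j$, $n$.
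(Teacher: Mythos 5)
Your proposal is correct and follows essentially the same route as the paper: the splitting $E_1+E_2$ with $E_2$ further decomposed via $g_{\a_j}=\varphi_{\a_j}+(\a_j+\cdot)^{-1}$ reproduces exactly the paper's $J_1$, $J_2$, $J_3$, and the key ingredients — absorbing $(A^*A)^{\pm\frac{b+s}{2(a+s)}}$ against the difference factors via \eqref{2.3} and Assumption \ref{A1}(c), the second resolvent identity, Lemma \ref{L20} with $1+\a_j(s_n-s_j)=\a_j(s_n-s_{j-1})$, and the Riesz--Dunford integral controlled by \eqref{200}, \eqref{g3} and \eqref{2.8} — all match. The only cosmetic difference is that the paper states the dual gain through $(A_x^*A_x)^{\frac{b+s}{2(a+s)}}$ applied to $(\a_jI+A_x^*A_x)^{-1}A_x^*w$ rather than your shorthand $\|(A-A_x)(A^*A)^{-\eta}\|$, but the mechanism is identical.
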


\begin{proof} Let $\eta_\a(\lambda)=(\a+\lambda)^{-1}$ and $\varphi_\a(\lambda)=g_\a(\lambda)-(\a+\lambda)^{-1}$.
We can write
$$
(A^*A)^\nu \prod_{k=j+1}^n r_{\a_k}(A^*A) \left[ g_{\a_j}(A^*A)A^*
-g_{\a_j}(A_x^*A_x)A_x^*\right] =J_1+J_2 +J_3,
$$
where
\begin{align*}
J_1&:=(A^*A)^\nu \prod_{k=j+1}^n r_{\a_k}(A^*A) g_{\a_j}(A^*A) [A^*-A_x^*],\\
J_2&:=(A^*A)^\nu \prod_{k=j+1}^n  r_{\a_k}(A^*A) \left[ \eta_{\a_j}(A^*A)-\eta_{\a_j}(A_x^* A_x)\right] A_x^*,\\
J_3&:=(A^*A)^\nu \prod_{k=j+1}^n  r_{\a_k}(A^*A) \left[ \varphi_{\a_j}(A^*A)-\varphi_{\a_j}(A_x^* A_x)\right] A_x^*.
\end{align*}
It suffices to show that the desired estimates hold for the norms of $J_1$, $J_2$ and $J_3$.

From (\ref{2.3}), (\ref{g2}) in Assumption \ref{A5} and Assumption \ref{A1}
it follows that
\begin{align*}
\|J_1\| & \lesssim
\left\|(A^*A)^\nu \prod_{k=j+1}^n r_{\a_k}(A^*A) g_{\a_j}(A^*A) (A^*A)^{\frac{b+s}{2(a+s)}}\right\|\\
&\qquad\qquad\qquad\qquad\qquad \qquad\qquad \times \left\|(A^*A)^{-\frac{b+s}{2(a+s)}}[A_x^*-A^*]\right\|\\
& \lesssim\sup_{0\le \lambda\le 1} \left(\lambda^{\nu+\frac{b+s}{2(a+s)}}
g_{\a_j}(\lambda) \prod_{k=j+1}^n r_{\a_k}(\lambda)\right) \|L^b [F'(x)^*-F'(x^\dag)^*]\|_{Y\to X}\\
& \lesssim \frac{1}{\a_j} (s_n-s_{j-1})^{-\nu-\frac{b+s}{2(a+s)}} K_0\|x-x^\dag\|^\beta
\end{align*}
which is the desired estimate.

In order to estimate $\|J_2\|$, we note that
\begin{align*}
\eta_{\a_j}(A^*A)-\eta_{\a_j}(A_x^*A_x)
&=(\a_j I+A^*A)^{-1} A^*(A_x-A) (\a_j I+A_x^*A_x)^{-1}\\
&\quad \, +(\a_j I+A^*A)^{-1} (A_x^*-A^*) A_x(\a_j I+A_x^*A_x)^{-1}.
\end{align*}
Therefore $J_2=J_2^{(1)}+J_2^{(2)}$, where
\begin{align*}
J_2^{(1)} &= (A^*A)^\nu \prod_{k=j+1}^n r_{\a_k}(A^*A) (\a_j I+A^*A)^{-1} A^*(A_x-A) (\a_j I+A_x^*A_x)^{-1} A_x^*, \\
J_2^{(2)} &= (A^*A)^\nu \prod_{k=j+1}^n r_{\a_k}(A^*A) (\a_j I+A^*A)^{-1} (A_x^*-A^*) A_x A_x^* (\a_j I+A_x A_x^*)^{-1}.
\end{align*}
With the help of Assumption \ref{A1} and (\ref{2.3}) we have
for any $w\in Y$ that
\begin{align*}
\|(A_x-A)(\a_j I&+A_x^*A_x)^{-1} A_x^* w\|\\
&=\|[F'(x)-F'(x^\dag)] L^b L^{-(b+s)}(\a_j I +A_x^*A_x)^{-1}A_x^* w\|\\
&\le K_0\|x-x^\dag\|^\beta \|(\a_j I+A_x^*A_x)^{-1} A_x^* w\|_{-(b+s)}\\
&\lesssim K_0\|x-x^\dag\|^\beta \|(A_x^*A_x)^{\frac{b+s}{2(a+s)}} (\a_j I + A_x^* A_x)^{-1} A_x^* w\|\\
&\lesssim K_0\|x-x^\dag\|^\beta \a_j^{-\frac{1}{2}+\frac{b+s}{2(a+s)}}\|w\|.
\end{align*}
This implies
\begin{equation}\label{7.10.1}
\|(A_x-A)(\a_j I +A_x^*A_x)^{-1} A_x^*\|
\lesssim K_0\|x-x^\dag\|^\beta \a_j^{-\frac{1}{2}+\frac{b+s}{2(a+s)}}.
\end{equation}
Thus, by using Lemma \ref{L20}, we derive
\begin{align*}
\|J_2^{(1)}\| &\le \sup_{0\le\lambda\le 1}
\left(\lambda^{\nu+\frac{1}{2}} (\a_j+\lambda)^{-1} \prod_{k=j+1}^n r_{\a_k}(\lambda)\right)
\|(A_x-A)(\a_j I +A_x^*A_x)^{-1} A_x^*\|\\
&\lesssim \a_j^{\nu-1+\frac{b+s}{2(a+s)}} \left(1+\a_j(s_n-s_j)\right)^{-\nu-\frac{1}{2}}
 K_0\|x-x^\dag\|^\beta.
\end{align*}
By using Assumption \ref{A1}, Lemma \ref{L20} and a similar argument in estimating $J_1$ we can derive
\begin{align*}
\|J_2^{(2)}\| & \lesssim\sup_{0\le \lambda\le 1} \left(\lambda^{\nu+\frac{b+s}{2(a+s)}}
(\a_j+\lambda)^{-1} \prod_{k=j+1}^n r_{\a_k}(\lambda)\right) \|L^b [F'(x)^*-F'(x^\dag)^*]\|_{Y\to X}\\
& \lesssim \a_j^{\nu-1+\frac{b+s}{2(a+s)}} \left(1+\a_j(s_n-s_j)\right)^{-\nu-\frac{b+s}{2(a+s)}} K_0\|x-x^\dag\|^\beta.
\end{align*}
Combining the above estimates on $J_2^{(1)}$ and $J_2^{(2)}$
and noting $\frac{b+s}{2(a+s)}\le \frac{1}{2}$, it follows that
\begin{align*}
\|J_2\| &\lesssim \a_j^{\nu-1+\frac{b+s}{2(a+s)}} \left(1+\a_j(s_n-s_j)\right)^{-\nu-\frac{b+s}{2(a+s)}}
K_0\|x-x^\dag\|^\beta\\
&=\frac{1}{\a_j} \left(s_n-s_{j-1}\right)^{-\nu-\frac{b+s}{2(a+s)}} K_0\|x-x^\dag\|^\beta.
\end{align*}

It remains to estimate $J_3$.  Since Assumption \ref{A4} implies
that $\varphi_{\a_j}(z)$ is analytic in $D_{\a_j}$, we have from
the Riesz-Dunford formula that
\begin{align}\label{300}
J_3 =\frac{1}{2\pi i} \int_{\Gamma_{\a_j}} \varphi_{\a_j}(z) T_j(z) dz,
\end{align}
where
$$
T_j(z):=(A^*A)^\nu \prod_{k=j+1}^n r_{\a_k}(A^*A)
\left[(z I-A^*A)^{-1}-(z I -A_x^* A_x)^{-1}\right] A_x^*.
$$
We can write $T_j(z)=T_j^{(1)}(z)+T_j^{(2)}(z)$, where
\begin{align*}
T_j^{(1)}(z)&:=(A^*A)^\nu \prod_{k=j+1}^n r_{\a_k}(A^*A)
(z I-A^*A)^{-1}A^*(A-A_x) (z I -A_x^* A_x)^{-1} A_x^*,\\
T_j^{(2)}(z)&:=(A^*A)^\nu \prod_{k=j+1}^n r_{\a_k}(A^*A)
(z I-A^*A)^{-1}(A^*-A_x^*) A_x A_x^*(z I -A_x A_x^*)^{-1}.
\end{align*}
We will estimate the norms of $T_j^{(1)}(z)$ and $T_j^{(2)}(z)$ for $z\in \Gamma_{\a_j}$.
With the help of Assumption \ref{A1}, (\ref{2.3}) and (\ref{200}), similar to the derivation
of (\ref{7.10.1}) we have
\begin{align*}
\|(A-A_x)(z I&-A_x^*A_x)^{-1} A_x^* \|
\lesssim K_0\|x-x^\dag\|^\beta |z|^{-\frac{1}{2}+\frac{b+s}{2(a+s)}}.
\end{align*}
Since $|z|\ge \a_j/2$ and $|z-\la|^{-1}\le b_0 (|z|+\la)^{-1}$ for $z\in \Gamma_{\a_j}$,
we have from (\ref{g3}) in Lemma \ref{L20} that
\begin{align*}
\|T_j^{(1)}(z)\| & \lesssim K_0\|x-x^\dag\|^\beta |z|^{-\frac{1}{2}+\frac{b+s}{2(a+s)}}\sup_{0\le \la \le 1}
\left(\la^{\nu+\frac{1}{2}} |z-\la|^{-1} \prod_{k=j+1}^n r_{\a_k}(\la)\right)\\
&\lesssim K_0\|x-x^\dag\|^\beta |z|^{-\frac{1}{2}+\frac{b+s}{2(a+s)}}\sup_{0\le \la \le 1}
\left(\la^{\nu+\frac{1}{2}} (|z|+\lambda)^{-1} \prod_{k=j+1}^n r_{\a_k}(\la)\right)\\
& \lesssim K_0\|x-x^\dag\|^\beta |z|^{\nu-1+\frac{b+s}{2(a+s)}} \left(1+(s_n-s_j)|z|\right)^{-\nu-1/2} \\
& \lesssim K_0\|x-x^\dag\|^\beta \a_j^{\nu-1+\frac{b+s}{2(a+s)}} \left(1+(s_n-s_j)\a_j \right)^{-\nu-1/2}.
\end{align*}
Next, by using (\ref{g3}) in Lemma \ref{L20}, (\ref{2.3}),
Assumption \ref{A1}(a) and (\ref{200}), we have for $z\in
\Gamma_{\a_j}$ that
\begin{align*}
\|T_j^{(2)}(z)\| &\le \left\|(A^*A)^\nu \prod_{k=j+1}^n r_{\a_k}(A^*A) (z I-A^* A)^{-1} (A^*A)^{\frac{b+s}{2(a+s)}} \right\|\\
&\qquad \qquad\qquad\qquad \times \left\|(A^*A)^{-\frac{b+s}{2(a+s)}} (A^*-A_x^*) A_x A_x^* (z I-A_x A_x^*)^{-1} \right\|\\
&\lesssim \sup_{0\le \lambda\le 1} \left(\lambda^{\nu+\frac{b+s}{2(a+s)}} |z-\lambda|^{-1} \prod_{k=j+1}^n r_{\a_k}(\lambda)\right)
\|L^b (F'(x^\dag)^*-F'(x)^*)\|\\
&\lesssim K_0\|x-x^\dag\|^\beta \sup_{0\le \lambda\le 1}
\left(\lambda^{\nu+\frac{b+s}{2(a+s)}} (|z|+\lambda)^{-1} \prod_{k=j+1}^n r_{\a_k}(\lambda)\right)\\
&\lesssim K_0\|x-x^\dag\|^\beta |z|^{\nu-1+\frac{b+s}{2(a+s)}} (1+(s_n-s_j)|z|)^{-\nu-\frac{b+s}{2(a+s)}}\\
&\lesssim K_0\|x-x^\dag\|^\beta \a_j^{\nu-1+\frac{b+s}{2(a+s)}} (1+(s_n-s_j)\a_j)^{-\nu-\frac{b+s}{2(a+s)}}.
\end{align*}
Combining the above estimates on $T_j^{(1)}(z)$ and $T_j^{(2)}(z)$
and noting $\frac{b+s}{2(a+s)}\le \frac{1}{2}$, it follows for $z\in
\Gamma_{\a_j}$ that
\begin{align*}
\|T_j(z)\|&\lesssim K_0\|x-x^\dag\|^\beta \a_j^{\nu-1+\frac{b+s}{2(a+s)}} (1+(s_n-s_j)\a_j)^{-\nu-\frac{b+s}{2(a+s)}}\\
&=\frac{1}{\a_j} (s_n-s_{j-1})^{-\nu-\frac{b+s}{2(a+s)}} K_0\|x-x^\dag\|^\beta
\end{align*}
Therefore, it follows from (\ref{300}) and Assumption \ref{A4} that
\begin{align*}
\|J_3\| & \lesssim \frac{1}{\a_j} (s_n-s_{j-1})^{-\nu-\frac{b+s}{2(a+s)}}
K_0\|x-x^\dag\|^\beta \int_{\Gamma_{\a_j}} |\varphi_{\a_j}(z)||d z|\\
& \lesssim \frac{1}{\a_j} (s_n-s_{j-1})^{-\nu-\frac{b+s}{2(a+s)}}
K_0\|x-x^\dag\|^\beta.
\end{align*}
The proof is therefore complete. \hfill $\Box$
\end{proof}

\section{\bf Convergence analysis}
\setcounter{equation}{0}

We begin with the following lemma.

\begin{lemma}\label{L2}
Let $\{\a_n\}$ be a sequence of positive numbers satisfying $\a_n\le c_1$, and let
$s_n$ be defined by (\ref{a1}).
Let $p\ge 0$ and $q\ge0$ be two numbers. Then we have
$$
\sum_{j=0}^n \frac{1}{\a_j} (s_n-s_{j-1})^{-p} s_j^{-q}\le C_0
s_n^{1-p-q} \left\{\begin{array}{lll}
1, & \max\{p,q\}<1,\\
\log (1+s_n), & \max\{p, q\}=1,\\
s_n^{\max\{p, q\}-1}, & \max\{p, q\}>1,
\end{array}\right.
$$
where $C_0$ is a constant depending only on $c_1$, $p$ and $q$.
\end{lemma}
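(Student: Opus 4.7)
The plan is to recognize the left-hand side as a Riemann-type sum attached to the partition $0=s_{-1}<s_0<s_1<\cdots<s_n$ of $[0,s_n]$, via the key identity $1/\alpha_j=s_j-s_{j-1}$, and to compare it with the Beta-type integral $\int_0^{s_n}(s_n-t)^{-p}t^{-q}\,dt$. Since this integral has potential singularities at both endpoints, the main idea is to decouple them before invoking any integral comparison.

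To this end I would split the sum at the index $J$ defined by $s_J\le s_n/2<s_{J+1}$ (with the natural convention if either side is empty). For $j\le J$ one has $s_n-s_{j-1}\ge s_n/2$, hence $(s_n-s_{j-1})^{-p}\lesssim s_n^{-p}$; for $j\ge J+1$ one has $s_j\ge s_n/2$, hence $s_j^{-q}\lesssim s_n^{-q}$. Pulling these uniform factors out reduces the estimate to bounding the two one-sided sums
\[
S_1:=\sum_{j=0}^{J}\frac{1}{\alpha_j}\,s_j^{-q},\qquad S_2:=\sum_{j=J+1}^{n}\frac{1}{\alpha_j}\,(s_n-s_{j-1})^{-p},
\]
so that the original sum is controlled by $s_n^{-p}S_1+s_n^{-q}S_2$ (up to a constant).

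To bound $S_1$, I use the monotonicity $t^{-q}\ge s_j^{-q}$ on $[s_{j-1},s_j]$ to get $\frac{1}{\alpha_j}s_j^{-q}\le\int_{s_{j-1}}^{s_j}t^{-q}\,dt$ for $j\ge 1$, whence the tail is majorized by $\int_{s_0}^{s_J}t^{-q}\,dt$, which evaluates to $\lesssim s_n^{1-q}$, $\lesssim\log(1+s_n)$, or $\lesssim 1$ according as $q<1$, $q=1$, or $q>1$. The isolated $j=0$ term equals $\alpha_0^{q-1}$ and is absorbed using $\alpha_0\le c_1$ together with $s_n\ge s_0=1/\alpha_0$. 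Symmetrically, $S_2$ is handled by $\frac{1}{\alpha_j}(s_n-s_{j-1})^{-p}\le\int_{s_{j-1}}^{s_j}(s_n-t)^{-p}\,dt$ for $J+1\le j\le n-1$, reducing the interior sum to $\int_{s_J}^{s_{n-1}}(s_n-t)^{-p}\,dt$, which again splits into three regimes in $p$, while the isolated $j=n$ term equals $\alpha_n^{p-1}$ and is absorbed using $\alpha_n\le c_1$ together with $s_n\ge 1/\alpha_n$.

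The main obstacle is the delicate bookkeeping of the two boundary terms $j=0$ and $j=n$, where the naive integral comparison breaks down due to the endpoint singularities of $t^{-q}$ at $0$ and $(s_n-t)^{-p}$ at $s_n$; the uniform bound $\alpha_n\le c_1$ from (\ref{60}) is precisely what keeps these contributions harmless. Once $S_1$ and $S_2$ are in hand, assembling the two halves and tabulating the outcome in each of the nine possibilities for the pair (regime of $p$, regime of $q$) yields exactly the three cases in the statement, with the dominant contribution always being $s_n^{1-p-q}$ times $1$, $\log(1+s_n)$, or $s_n^{\max\{p,q\}-1}$.
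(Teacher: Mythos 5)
Your proposal is correct and follows essentially the same strategy as the paper's proof: decouple the two singularities by observing that at the threshold $s_n/2$ at most one of the factors $(s_n-s_{j-1})^{-p}$, $s_j^{-q}$ can be large, reduce to two one-sided sums, and compare each with the corresponding integral via $1/\a_j=s_j-s_{j-1}$, using $\a_j\le c_1$ to control the endpoint contributions. The only (cosmetic) differences are that the paper bounds the summand pointwise and sums over the full index range in normalized variables, absorbing the boundary terms by slightly extending the integration interval, whereas you split the index range at $J$ and isolate the $j=0$ and $j=n$ terms explicitly.
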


\begin{proof} This result is essentially contained in
\cite[Lemma 4.3]{H2010} and its proof. For completeness,
we include here the proof with a simplified argument.
We first rewrite
$$
\sum_{j=0}^n \frac{1}{\a_j} (s_n-s_{j-1})^{-p} s_j^{-q}
=s_n^{1-p-q} \sum_{j=0}^n \frac{1}{\a_j s_n}\left(1-\frac{s_{j-1}}{s_n}\right)^{-p}
\left(\frac{s_j}{s_n}\right)^{-q}.
$$
Observe that when $0\le s_{j-1}/s_n\le 1/2$ we have
$$
\left(1-\frac{s_{j-1}}{s_n}\right)^{-p} \left(\frac{s_j}{s_n}\right)^{-q}
\le 2^p \left(\frac{s_j}{s_n}\right)^{-q}
$$
while when $s_{j-1}/s_n\ge 1/2$ we have
$$
\left(1-\frac{s_{j-1}}{s_n}\right)^{-p} \left(\frac{s_j}{s_n}\right)^{-q}
\le 2^q  \left(1-\frac{s_{j-1}}{s_n}\right)^{-p}.
$$
Consequently there holds with $C_{p,q}=\max\{2^p, 2^q\}$
\begin{align}\label{7.12.1}
\sum_{j=0}^n \frac{1}{\a_j} &(s_n-s_{j-1})^{-p} s_j^{-q} \nonumber\\
&\le C_{p,q} s_n^{1-p-q} \left(\sum_{j=0}^n \frac{1}{\a_j s_n} \left(\frac{s_j}{s_n}\right)^{-q}
+\sum_{j=0}^n \frac{1}{\a_j s_n} \left(1-\frac{s_{j-1}}{s_n}\right)^{-p}\right).
\end{align}
Note that $s_j-s_{j-1}=1/\a_j$, we have with $h=\frac{1}{2\a_0 s_n}$
\begin{align*}
\int_{s_0/s_n-h}^1 t^{-q} dt
&=\sum_{j=1}^n \int_{s_{j-1}/s_n}^{s_j/s_n} t^{-q} dt +\int_{s_0/s_n-h}^{s_0/s_n} t^{-q} dt\\
&\ge \sum_{j=1}^n \left(\frac{s_j}{s_n}\right)^{-q} \frac{s_j-s_{j-1}}{s_n}
+\frac{1}{2\a_0 s_n} \left(\frac{s_0}{s_n}\right)^{-q} \\
&\ge \frac{1}{2} \sum_{j=0}^n \frac{1}{\a_j s_n} \left(\frac{s_j}{s_n}\right)^{-q}.
\end{align*}
Therefore
\begin{align}\label{7.12.2}
\sum_{j=0}^n \frac{1}{\a_j s_n} \left(\frac{s_j}{s_n}\right)^{-q}
&\le 2 \int_{s_0/s_n-h}^1 t^{-q} dt
\le \left\{\begin{array}{lll}
\frac{2}{1-q}, & q<1,\\
2\log (2\a_0 s_n), & q=1,\\
\frac{2}{q-1} (2\a_0 s_n)^{q-1}, \quad & q>1.
\end{array}\right.
\end{align}
By a similar argument we have with $h=\frac{1}{2\a_n s_n}$
\begin{align}\label{7.12.3}
\sum_{j=0}^n \frac{1}{\a_j s_n} \left(1-\frac{s_{j-1}}{s_n}\right)^{-p}
&\le 2 \int_0^{\frac{s_{n-1}}{s_n}+h}(1- t)^{-p} dt
\le \left\{\begin{array}{lll}
\frac{2}{1-p}, & p<1,\\
2\log (2\a_n s_n), & p=1,\\
\frac{2}{p-1} (2\a_n s_n)^{p-1},  & p>1.
\end{array}\right.
\end{align}
Combining (\ref{7.12.1}), (\ref{7.12.2}) and (\ref{7.12.3}) and using the condition $\a_n\le c_1$,
we obtain the desired inequalities. \hfill $\Box$
\end{proof}

In order to derive the necessary estimates on $x_n-x^\dag$, we need
some useful identities. For simplicity of presentation, we set
$$
e_n:=x_n-x^\dag, \quad  A:=F'(x^\dag) L^{-s} \quad \mbox{and} \quad A_n:=F'(x_n) L^{-s}.
$$
It follows from (\ref{2}) and (\ref{2.2}) that
\begin{align*}
e_{n+1}&=e_n- L^{-s} g_{\a_n}\left(A_n^* A_n\right) A_n^* (F(x_n)-y^\d).
\end{align*}
Let
$$
u_n:=F(x_n)-y-F'(x^\dag) (x_n-x^\dag).
$$
Then we can write
\begin{align}\label{400}
e_{n+1}&= e_n-L^{-s} g_{\a_n}(A^*A) A^* (F(x_n)-y^\d) \nonumber\\
&\quad\, -L^{-s} \left[g_{\a_n}(A_n^*A_n)A_n^* -g_{\a_n}(A^*A) A^*\right] (F(x_n)-y^\d) \nonumber\\
&=L^{-s} r_{\a_n}(A^*A) L^s e_n -L^{-s} g_{\a_n}(A^*A) A^* (y-y^\d+ u_n) \nonumber\\
&\quad\, -L^{-s} \left[g_{\a_n}(A_n^*A_n)A_n^* -g_{\a_n}(A^*A)A^* \right] (F(x_n)-y^\d).
\end{align}
By telescoping (\ref{400}) we can obtain
\begin{align}\label{20}
e_{n+1} &=L^{-s} \prod_{j=0}^n r_{\a_j}(A^* A) L^{s} e_0 \nonumber\\
&\quad\,  -L^{-s} \sum_{j=0}^n \prod_{k=j+1}^n r_{\a_k}(A^*A) g_{\a_j}(A^*A) A^*(y-y^\d+u_j) \nonumber\\
&\quad\, -L^{-s} \sum_{j=0}^n \prod_{k=j+1}^n r_{\a_k}(A^*A)
\left[g_{\a_j}(A_j^*A_j)A_j^*-g_{\a_j}(A^*A)A^*\right] (F(x_j)-y^\d).
\end{align}
By multiplying  (\ref{20}) by $T:=F'(x^\dag)$ and noting that $A=T L^{-s}$ and
$$
I-\sum_{j=0}^n \prod_{k=j+1}^n r_{\a_k}(AA^*)
g_{\a_j}(AA^*) AA^*=\prod_{j=0}^n r_{\a_j}(AA^*),
$$
we can obtain
\begin{align}\label{21}
T &e_{n+1}-y^\d+y \nonumber\\
& =A \prod_{j=0}^n r_{\a_j}(A^*A) L^s e_0
 +\prod_{j=0}^n r_{\a_j}(AA^*)(y-y^\d) \nonumber\\
&\quad\,  - \sum_{j=0}^n \prod_{k=j+1}^n r_{\a_k}(AA^*) g_{\a_j}(AA^*) AA^*u_j \nonumber\\
&\quad\, - \sum_{j=0}^n A \prod_{k=j+1}^n
r_{\a_k}(A^*A) \left[g_{\a_j}(A_j^*A_j)A_j^*-g_{\a_j}(A^*A)A^*\right] (F(x_j)-y^\d).
\end{align}

Based on (\ref{20}) and (\ref{21}) we will derive the order optimal
convergence rate of $x_{n_\d}$ to $x^\dag$ when $e_0:=x_0-x^\dag$
satisfies the smoothness condition (\ref{33}). Under such condition
we have $L^s e_0\in X_{\mu-s}$ and $|\frac{\mu-s}{a+s}|\le 1$. Thus,
with the help of Assumption \ref{A1}(a), it follows from (\ref{2.5})
and (\ref{2.3}) that there exists $\omega\in X$ such that
\begin{equation}\label{31}
L^s e_0=(A^* A)^{\frac{\mu-s}{2(a+s)}} \omega \quad \mbox{and}\quad
c_2 \|\omega\|\le \|e_0\|_\mu\le c_3 \|\omega\|
\end{equation}
for some generic constants $c_3\ge c_2>0$. We will first derive the
crucial estimates on $\|e_n\|_\mu$ and $\|Te_n\|$. To this end, we
introduce the integer $\tilde{n}_\d$ satisfying
\begin{equation}
s_{\tilde{n}_\d}^{-\frac{a+\mu}{2(a+s)}}\le \frac{(\tau-1)\d}{ 2 c_0 \|\omega\|}< s_n^{-\frac{a+\mu}{2(a+s)}},
\quad 0\le n<\tilde{n}_\d,
\end{equation}
where $c_0>1$ is the constant appearing in (\ref{60}).
Such $\tilde{n}_\d$ is well-defined since $s_n\rightarrow \infty$
as $n\rightarrow \infty$.

\begin{proposition}\label{P1}
Let $F$ satisfy Assumptions \ref{A1}, let $\{g_\a\}$ satisfy
Assumptions \ref{A4} and \ref{A5}, and let $\{\a_n\}$ be a sequence
of positive numbers satisfying (\ref{60}). If $e_0\in X_\mu$ for
some $(a-b)/\beta< \mu\le b+2s$ and if $K_0 \|\omega\|^\beta$ is
suitably small, then there exists a generic constant $C_*>0$ such
that
\begin{align}\label{30}
\|e_n\|_\mu \le C_* \|\omega\| \qquad \mbox{and} \qquad
\|T e_n\| \le C_* s_n^{-\frac{a+\mu}{2(a+s)}} \|\omega\|
\end{align}
and
\begin{equation}\label{50}
\|T e_n-y^\d+y\|\le (c_0 +C_* K_0\|\omega\|^\beta)
s_n^{-\frac{a+\mu}{2(a+s)}} \|\omega\| +\d
\end{equation}
for all $0\le n\le \tilde{n}_\d$.
\end{proposition}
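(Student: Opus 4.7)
The plan is to prove (30) and (50) simultaneously by induction on $n$. For the base case $n=0$, the representation (31) gives $L^s e_0 = (A^*A)^{(\mu-s)/(2(a+s))}\omega$, and then the norm equivalence (2.3) together with $\|A\|\le 1$ immediately yields $\|e_0\|_\mu\lesssim \|\omega\|$ and $\|Te_0\| = \|(A^*A)^{(a+\mu)/(2(a+s))}\omega\|\le \|\omega\|$, matching $s_0^{-(a+\mu)/(2(a+s))}\|\omega\|$ up to a constant since $s_0=1/\a_0$ is bounded by (60). Estimate (50) at $n=0$ then follows directly since $\|y-y^\d\|\le\d$.

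Before handling the inductive step, I would convert the assumed bounds at indices $\le n$ into scale-norm estimates on $e_j$. Assumption \ref{A1}(a) gives $\|e_j\|_{-a}\lesssim\|Te_j\|$, and interpolation (2.1) between $X_{-a}$ and $X_\mu$ then produces $\|e_j\|\lesssim\|\omega\| s_j^{-\mu/(2(a+s))}$ and $\|e_j\|_{-b}\lesssim\|\omega\| s_j^{-(\mu+b)/(2(a+s))}$. Feeding these into (5.9.3) yields $\|u_j\|\lesssim K_0\|\omega\|^{\beta+1} s_j^{-((\beta+1)\mu+b)/(2(a+s))}$, and the resulting decay exponent strictly exceeds $(a+\mu)/(2(a+s))$ precisely because $\mu>(a-b)/\beta$.

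The main work then unpacks the telescoped identities (20) and (21). For (50) I take the $X$-norm of (21): the $e_0$-term is estimated via (g1) with $\nu=(a+\mu)/(2(a+s))\in[0,1]$, the $y-y^\d$-term gives $\d$ via $\|r_{\a_j}\|\le 1$, the $u_j$-sum is controlled by (g2) with $\nu=1$, and the linearization-residual sum is exactly the situation of Lemma \ref{L10} with $\nu=1/2$. For $\|e_{n+1}\|_\mu$ I apply $L^\mu$ to (20); by (2.3) the resulting $X$-norms are equivalent to $\|(A^*A)^{(s-\mu)/(2(a+s))}(\cdot)\|$ applied to the bracketed operators. The choice $\nu=(s-\mu)/(2(a+s))$ sits in Lemma \ref{L10}'s admissible range exactly because $-a\le\mu\le b+2s$, while companion terms factor via $(A^*A)^\alpha A^*=A^*(AA^*)^\alpha$ into an application of (g2) on the range side with exponent $(a+2s-\mu)/(2(a+s))\in[0,1]$.

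After these pointwise estimates, every sum reduces to $\sum_j \a_j^{-1}(s_n-s_{j-1})^{-p}s_j^{-q}$, which Lemma \ref{L2} bounds by $C s_n^{1-p-q}$ up to the tabulated logarithmic corrections. The data-noise contributions are then matched to the target rate $s_n^{-(a+\mu)/(2(a+s))}\|\omega\|$ by invoking $n\le\tilde{n}_\d$ to trade $\d$ for this quantity up to a constant. The nonlinearity terms carry an extra factor $K_0\|\omega\|^\beta$, which by smallness is absorbed into half of $C_*$, closing the induction. I expect the main technical obstacle to be the borderline cases of Lemma \ref{L2} where some exponent equals $1$: the strict inequality $\mu>(a-b)/\beta$ is what supplies the slack needed to beat the associated $\log(1+s_n)$ factors, so one must keep careful bookkeeping of all spectral-calculus exponents to ensure Lemma \ref{L10} and the structure conditions apply at every step.
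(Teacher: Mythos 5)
Your proposal follows essentially the same route as the paper: induction on $n$, the telescoped identities (\ref{20})--(\ref{21}), Lemma \ref{L10} with $\nu=1/2$ for the image-space estimate and $\nu=\frac{s-\mu}{2(a+s)}$ for the $X_\mu$-estimate, interpolation plus (\ref{5.9.3}) for $\|u_j\|$, Lemma \ref{L2} for the sums, the definition of $\tilde{n}_\d$ to trade $\d$ for $s_j^{-\frac{a+\mu}{2(a+s)}}\|\omega\|$, and smallness of $K_0\|\omega\|^\beta$ to close the induction. Your remark that the strict inequality $\mu>(a-b)/\beta$ is what defeats the borderline $\log(1+s_n)$ factor in Lemma \ref{L2} (which genuinely occurs with $p=1$ in the $u_j$-sum of (\ref{21})) is exactly the mechanism the paper relies on.

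One step as written would fail: in the base case you bound $\|Te_0\|\le\|\omega\|$ using only $\|A\|\le 1$ and then claim this matches $s_0^{-\frac{a+\mu}{2(a+s)}}\|\omega\|$ up to a constant ``since $s_0=1/\a_0$ is bounded by (\ref{60})''. Condition (\ref{60}) gives only $\a_0\le c_1$, i.e.\ a lower bound $s_0\ge 1/c_1$, so $s_0^{-\frac{a+\mu}{2(a+s)}}$ can be arbitrarily small and $\|\omega\|\le C\, s_0^{-\frac{a+\mu}{2(a+s)}}\|\omega\|$ does not follow. The correct justification is the scaling condition in Assumption \ref{A1}(b), $\|A\|\le\sqrt{\a_0}$, which gives
\begin{equation*}
\|Te_0\|=\|(A^*A)^{\frac{a+\mu}{2(a+s)}}\omega\|\le \a_0^{\frac{a+\mu}{2(a+s)}}\|\omega\|=s_0^{-\frac{a+\mu}{2(a+s)}}\|\omega\|
\end{equation*}
with no extra constant (note $\frac{a+\mu}{2(a+s)}\le 1$ since $\mu\le b+2s\le a+2s$). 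With that repair your outline matches the paper's proof.
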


\begin{proof}
We will show (\ref{30}) by induction. By using (\ref{31}) and
$\|A\|\le \sqrt{\a_0}$ we have
$$
\|T e_0\|=\|A L^s e_0\|=\|(A^*A)^{1/2} L^s e_0\|=\|(A^*A)^{\frac{a+\mu}{2(a+s)}} \omega\|
\le \a_0^{\frac{a+\mu}{2(a+s)}} \|\omega\|.
$$
This together with (\ref{31}) shows (\ref{30}) for $n=0$ if $C_*\ge \max\{1, c_3\}$.
Next we assume that (\ref{30}) holds
for all $0\le n\le l$ for some $l<\tilde{n}_\d$ and we are going to show (\ref{30}) holds for $n=l+1$.

With the help of (\ref{2.3}) and (\ref{31}) we can derive from (\ref{20}) that
\begin{align*}
&\|e_{l+1}\|_\mu \\
&\lesssim  \left\|\prod_{j=0}^l r_{\a_j}(A^*A) \omega\right\| +
\left\|\sum_{j=0}^l  (AA^*)^{\frac{a+2s-\mu}{2(a+s)}}
g_{\a_j}(AA^*) \prod_{k=j+1}^l r_{\a_k} (AA^*) (y-y^\d+u_j)\right\| \\
& +\left\| \sum_{j=0}^l (A^*A)^{\frac{s-\mu}{2(a+s)}}
\prod_{k=j+1}^l r_{\a_k}(A^*A)
\left[g_{\a_j}(A_j^*A_j)A_j^*-g_{\a_j}(A^*A)A^*\right]
(F(x_j)-y^\d)\right\|.
\end{align*}
Since $(a-b)/\beta<\mu\le b+2s$ and $0\le b\le a$, we have
$$
0\le \frac{a+2s-\mu}{2(a+s)}<1 \quad \mbox{and} \quad
-\frac{b+s}{2(a+s)}\le \frac{s-\mu}{2(a+s)}< \frac{1}{2}.
$$
Thus we may use Assumption \ref{A5} and Lemma \ref{L10} to conclude
\begin{align}\label{40}
\|e_{l+1}\|_\mu &\lesssim  \|\omega\| + \sum_{j=0}^l \frac{1}{\a_j}
(s_l-s_{j-1})^{-\frac{a+2s-\mu}{2(a+s)}}(\d+\|u_j\|) \nonumber\\
&\quad\, + \sum_{j=0}^l \frac{1}{\a_j}
(s_l-s_{j-1})^{-\frac{b+2s-\mu}{2(a+s)}} K_0\|e_j\|^\beta
\|F(x_j)-y^\d\|.
\end{align}
Moreover, by using (\ref{31}), Assumption \ref{A5} and Lemma
\ref{L10}, we have from (\ref{21}) that
\begin{align}\label{41}
\|T e_{l+1}-y^\d+y\| &\le s_l^{-\frac{a+\mu}{2(a+s)}} \|\omega\|+\d
+ b_2 \sum_{j=0}^l \frac{1}{\a_j} (s_l-s_{j-1})^{-1} \|u_j\| \nonumber\\
&\quad\, + c_4 \sum_{j=0}^l \frac{1}{\a_j}
(s_l-s_{j-1})^{-\frac{b+a+2s}{2(a+s)}} K_0\|e_j\|^\beta
\|F(x_j)-y^\d\|,
\end{align}
where $c_4>0$ is a generic constant.

By using the interpolation inequality (\ref{2.1}), Assumption
\ref{A1}(a) and the induction hypotheses, it follows for all $0\le
j\le l$ that
\begin{equation}\label{e10}
\|e_j\|\le \|e_j\|_{-a}^{\frac{\mu}{a+\mu}}
\|e_j\|_\mu^{\frac{a}{a+\mu}} \lesssim \|T e_j\|^{\frac{\mu}{a+\mu}}
\|e_j\|_\mu^{\frac{a}{a+\mu}}\lesssim \|\omega\|
s_j^{-\frac{\mu}{2(a+s)}}.
\end{equation}
With the help of (\ref{5.9.3}) and the interpolation inequality
(\ref{2.1}), we have
\begin{equation}\label{e11}
\|u_j\|\le K_0\|e_j\|^\beta \|e_j\|_{-b}\le
K_0\|e_j\|_{-a}^{\frac{b+\mu+\mu\beta}{a+\mu}}
\|e_j\|_\mu^{\frac{a+a\beta-b}{a+\mu}}.
\end{equation}
We then obtain from Assumption \ref{A1}(a) and the induction hypotheses that
\begin{equation}\label{e12}
\|u_j\|\lesssim K_0\|T e_j\|^{\frac{b+\mu+\mu \beta}{a+\mu}} \|e_j\|_\mu^{\frac{a+a\beta-b}{a+\mu}}
\lesssim K_0 \|\omega\|^{1+\beta} s_j^{-\frac{b+\mu+\mu \beta}{2(a+s)}}.
\end{equation}
On the other hand, since (\ref{1000}) and the induction hypotheses implies
$$
\|e_j\|_{-a}\lesssim \|e_j\|_\mu\lesssim \|\omega\|, \qquad 0\le j\le l
$$
and since $\mu >(a-b)/\beta$, we have from (\ref{e11}) and
Assumption \ref{A1}(a) that
\begin{align}\label{e13}
\|u_j\| \lesssim K_0\|e_j\|_{-a}
\|e_j\|_{-a}^{\frac{b-a+\mu\beta}{a+\mu}}
\|e_j\|_\mu^{\frac{a+a\beta-b}{a+\mu}} \lesssim K_0\|\omega\|^\beta
\|T e_j\|.
\end{align}
Therefore, by using the fact
\begin{equation}\label{77}
\d\le \frac{2 c_0}{\tau-1} \|\omega\| s_j^{-\frac{a+\mu}{2(a+s)}},
\qquad 0\le j\le l
\end{equation}
and the induction hypotheses we have
\begin{equation}\label{e14}
\|F(x_j)-y^\d\|\le \d +\|T e_j\| +\|u_j\| \lesssim \|\omega\|
s_j^{-\frac{a+\mu}{2(a+s)}}.
\end{equation}
In view of the estimates (\ref{e10}), (\ref{e12}), (\ref{e14}) and the inequality
$$
\sum_{j=0}^l \frac{1}{\a_j} (s_l-s_{j-1})^{-\frac{a+2s-\mu}{2(a+s)}}\lesssim
s_l^{\frac{a+\mu}{2(a+s)}}
$$
which follows from Lemma \ref{L2},
we have from (\ref{40}) and (\ref{41}) that
\begin{align*}
\|e_{l+1}\|_\mu & \le c_5 \|\omega\| + c_5 s_l^{\frac{a+\mu}{2(a+s)}} \d \\
&\quad\, +CK_0\|\omega\|^{1+\beta} \sum_{j=0}^l \frac{1}{\a_j}
(s_l-s_{j-1})^{-\frac{a+2s-\mu}{2(a+s)}} s_j^{-\frac{b+\mu+\mu\beta}{2(a+s)}}\\
&\quad\, + CK_0\|\omega\|^{1+\beta} \sum_{j=0}^l \frac{1}{\a_j}
(s_l-s_{j-1})^{-\frac{b+2s-\mu}{2(a+s)}} s_j^{-\frac{a+\mu+\mu\beta}{2(a+s)}}
\end{align*}
and
\begin{align*}
\|T e_{l+1}-y^\d+y\| &\le \|\omega\| s_l^{-\frac{a+\mu}{2(a+s)}} +\d \nonumber\\
&\quad\, + CK_0\|\omega\|^{1+\beta} \sum_{j=0}^l \frac{1}{\a_j}
(s_l-s_{j-1})^{-1} s_j^{-\frac{b+\mu+\mu\beta}{2(a+s)}} \nonumber\\
&\quad\, +CK_0\|\omega\|^{1+\beta} \sum_{j=0}^l \frac{1}{\a_j} (s_l-s_{j-1})^{-\frac{b+a+2s}{2(a+s)}}
s_j^{-\frac{a+\mu+\mu\beta}{2(a+s)}},
\end{align*}
where $c_5$ and $C$ are two positive generic constants.

With the help of Lemma \ref{L2}, $\mu>(a-b)/\beta$, (\ref{77})
and (\ref{60}) we have
\begin{align*}
\|e_{l+1}\|_\mu \le \left(c_5  +\frac{2}{\tau-1} c_0 c_5 +C K_0\|\omega\|^\beta\right) \|\omega\|,
\end{align*}
and
\begin{align}\label{80}
\|Te_{l+1}-y^\d+y\| &\le \d+ \left(1 +CK_0\|\omega\|^\beta\right)
\|\omega\| s_l^{-\frac{a+\mu}{2(a+s)}} \nonumber\\
&\le \d+ c_0 \left(1+CK_0\|\omega\|^\beta\right) \|\omega\|
s_{l+1}^{-\frac{a+\mu}{2(a+s)}}.
\end{align}
Consequently $\|e_{l+1}\|_\mu \le C_* \|\omega\|$ if $C_*\ge
2c_5+\frac{2}{\tau-1} c_0 c_5$ and $K_0\|\omega\|^\beta$ is suitably
small. Moreover, from (\ref{80}), (\ref{77}) and (\ref{60}) we also
have
\begin{align*}
\|T e_{l+1}\| &\le 2\d + c_0\left(1+ CK_0\|\omega\|^\beta\right)
\|\omega\| s_{l+1}^{-\frac{a+\mu}{2(a+s)}}\\
&\le  \left(\frac{4 c_0^2}{\tau-1} +c_0 +CK_0\|\omega\|^\beta\right)
\|\omega\| s_{l+1}^{-\frac{a+\mu}{2(a+s)}}\\
&\le C_* \|\omega\| s_{l+1}^{-\frac{a+\mu}{2(a+s)}}
\end{align*}
if $C_*\ge 2 c_0 +\frac{4 c_0^2}{\tau-1}$ and $K_0\|\omega\|^\beta$
is suitably small. We therefore complete the proof of (\ref{30}). In
the meanwhile, (\ref{80}) gives the proof of (\ref{50}). \hfill
$\Box$
\end{proof}

From Proposition \ref{P1} and its proof it follows that $x_n\in B_\rho(x^\dag)$ for
$0\le n\le \tilde{n}_\d$ if $\|\omega\|$ is sufficiently small.
Furthermore, from (\ref{e12}) and (\ref{e13}) we have
\begin{equation}\label{66}
\|F(x_n)-y-T e_n\|\lesssim K_0\|\omega\|^{1+\beta} s_n^{-\frac{b+\mu+\mu\beta}{2(a+s)}}
\end{equation}
and
\begin{equation}\label{67}
\|F(x_n)-y-T e_n\|\lesssim K_0\|\omega\|^\beta \|Te_n\|
\end{equation}
for $0\le n\le \tilde{n}_\d$.

In the following we will show that $n_\d\le \tilde{n}_\d$ for the
integer $n_\d$ defined by (\ref{DP}) with $\tau>1$.
Consequently, the method given by (\ref{2}) and (\ref{DP}) is well-defined.

\begin{lemma}\label{L44}
Let all the conditions in Proposition \ref{P1} hold. Let $\tau>1$ be a given number.
If $e_0\in X_\mu$ for some $(a-b)/\beta<\mu\le b+2s$ and if $K_0\|e_0\|_\mu^\beta$ is suitably small, then the
discrepancy principle (\ref{DP}) defines a finite integer $n_\d$ satisfying $n_\d\le \tilde{n}_\d$.
\end{lemma}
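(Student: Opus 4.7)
The plan is to show that the discrepancy $\|F(x_{\tilde{n}_\d})-y^\d\|$ is already bounded by $\tau\d$, so that the smallest integer $n_\d$ satisfying $\|F(x_{n_\d})-y^\d\|\le \tau\d$ satisfies $n_\d\le \tilde{n}_\d$; the definition of $\tilde{n}_\d$ together with $s_n\to\infty$ ensures finiteness. By Proposition \ref{P1} (together with the remark preceding the lemma) the iterates $x_n$ remain in $B_\rho(x^\dag)$ for $0\le n\le \tilde{n}_\d$ when $\|\omega\|$ is sufficiently small, so the scheme is well-defined up through $\tilde{n}_\d$.

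The key decomposition is the triangle inequality
\[
\|F(x_{\tilde{n}_\d})-y^\d\|\le \|Te_{\tilde{n}_\d}-y^\d+y\|+\|F(x_{\tilde{n}_\d})-y-Te_{\tilde{n}_\d}\|.
\]
For the first term I would apply (\ref{50}) at $n=\tilde{n}_\d$ and insert the defining inequality $s_{\tilde{n}_\d}^{-(a+\mu)/(2(a+s))}\le (\tau-1)\d/(2c_0\|\omega\|)$ to obtain
\[
\|Te_{\tilde{n}_\d}-y^\d+y\|\le \left(\tfrac{1}{2}+\tfrac{C_*K_0\|\omega\|^\beta}{2c_0}\right)(\tau-1)\d+\d.
\]
For the second term I would combine the linearization error estimate (\ref{67}) with the bound on $\|Te_{\tilde{n}_\d}\|$ in (\ref{30}) and again use the definition of $\tilde{n}_\d$, yielding
\[
\|F(x_{\tilde{n}_\d})-y-Te_{\tilde{n}_\d}\|\lesssim K_0\|\omega\|^\beta\|Te_{\tilde{n}_\d}\|\lesssim K_0\|\omega\|^\beta(\tau-1)\d.
\]

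Adding these bounds gives
\[
\|F(x_{\tilde{n}_\d})-y^\d\|\le \left(\tfrac{\tau+1}{2}+C K_0\|\omega\|^\beta\right)\d
\]
for some constant $C$ independent of $\d$. Since $(\tau+1)/2<\tau$, the smallness assumption on $K_0\|e_0\|_\mu^\beta$ (which is equivalent to smallness of $K_0\|\omega\|^\beta$ via (\ref{31})) allows us to absorb the correction and conclude $\|F(x_{\tilde{n}_\d})-y^\d\|\le \tau\d$. Therefore the discrepancy principle triggers at some $n_\d\le \tilde{n}_\d$, and $\tilde{n}_\d<\infty$ by (\ref{60}).

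The main obstacle, such as it is, lies in making the smallness threshold concrete: one must verify that the constant $C$ above (coming from the $\lesssim$ in (\ref{67}) and the factor $C_*/(2c_0)$ in the first estimate) depends only on the generic constants in Proposition \ref{P1} and not on $\d$, $n$, or $\tilde{n}_\d$. Once this is observed, the inequality $(\tau+1)/2+CK_0\|\omega\|^\beta\le \tau$ is achieved by further shrinking the smallness threshold imposed in the hypothesis, exactly as was already done in the proof of Proposition \ref{P1}.
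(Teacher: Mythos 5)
Your proof is correct and follows essentially the same route as the paper: bound $\|F(x_{\tilde{n}_\d})-y^\d\|$ by the triangle inequality, control the linear part via (\ref{50}) and the defining inequality for $\tilde{n}_\d$, and absorb the remainder using the smallness of $K_0\|\omega\|^\beta$. The only cosmetic difference is that you estimate the linearization error through (\ref{67}) combined with the bound on $\|Te_{\tilde{n}_\d}\|$ from (\ref{30}), whereas the paper invokes (\ref{66}) and the inequality $b+\mu+\mu\beta>a+\mu$ (from $\mu>(a-b)/\beta$); both yield the same bound $\left(\frac{\tau+1}{2}+CK_0\|\omega\|^\beta\right)\d\le\tau\d$.
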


\begin{proof}
From Proposition \ref{P1}, (\ref{66}) and $\mu> (a-b)/\beta$
it follows for $0\le n\le \tilde{n}_\d$ that
\begin{align*}
\|F(x_n)-y^\d\| & \le \|F(x_n)-y-T e_n\|+\|T e_n -y^\d+y\|\\
&\le C K_0\|\omega\|^{1+\beta} s_n^{-\frac{b+\mu+\mu\beta}{2(a+s)}}
+\left(c_0 +CK_0\|\omega\|^\beta\right) s_n^{-\frac{a+\mu}{2(a+s)}} \|\omega\|+\d\\
&\le \left(c_0 +CK_0\|\omega\|^\beta\right) s_n^{-\frac{a+\mu}{2(a+s)}} \|\omega\| +\d.
\end{align*}
By setting $n=\tilde{n}_\d$ in the above inequality and using the definition of $\tilde{n}_\d$ we obtain
\begin{align*}
\|F(x_{\tilde{n}_\d})-y^\d\|\le \left(1+\frac{\tau-1}{2}+ CK_0\|\omega\|^\beta\right) \d\le \tau \d
\end{align*}
if $K_0\|\omega\|^\beta$ is suitably small. According to the
definition of $n_\d$ we have $n_\d\le \tilde{n}_\d$. \hfill $\Box$
\end{proof}

Now we are ready to prove the main result concerning the order optimal convergence rates
for the method defined by (\ref{2}) and (\ref{DP}) with $\tau>1$.

\begin{theorem}\label{T1}
Let $F$ satisfy Assumptions \ref{A1}, let $\{g_\a\}$
satisfy Assumptions \ref{A4} and \ref{A5},  and let $\{\a_n\}$ be
a sequence of positive numbers satisfying (\ref{60}).
If $e_0\in X_\mu$ for some $(a-b)/\beta<\mu\le b+2s$ and if $K_0\|e_0\|_\mu^\beta$ is suitably small,
then for all $r\in [-a, \mu]$ there holds
$$
\|x_{n_\d}-x^\dag\|_r \le C \|e_0\|_\mu^{\frac{a+r}{a+\mu}} \d^{\frac{\mu-r}{a+\mu}}
$$
for the integer $n_\d$ determined by the discrepancy principle
(\ref{DP}) with $\tau>1$, where $C>0$ is a generic constant.
\end{theorem}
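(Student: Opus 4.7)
The plan is to deduce the theorem from Proposition \ref{P1} through the interpolation inequality (\ref{2.1}). Since Lemma \ref{L44} asserts that $n_\d$ is finite and satisfies $n_\d \le \tilde n_\d$, every bound established in Proposition \ref{P1} is available at the stopping index. In particular, combining $\|e_{n_\d}\|_\mu \le C_*\|\omega\|$ with the two-sided equivalence $c_2\|\omega\|\le\|e_0\|_\mu\le c_3\|\omega\|$ from (\ref{31}) immediately yields the endpoint bound $\|e_{n_\d}\|_\mu \lesssim \|e_0\|_\mu$, which is the claim for $r=\mu$.

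The second step is to obtain the other endpoint $\|e_{n_\d}\|_{-a} \lesssim \d$. Writing $F(x_{n_\d})-y = T e_{n_\d} + u_{n_\d}$ and combining the triangle inequality with (\ref{1.2}) and the discrepancy principle (\ref{DP}), I get
$$
\|T e_{n_\d}\| \le \|F(x_{n_\d})-y^\d\| + \|y^\d-y\| + \|u_{n_\d}\| \le (\tau+1)\d + \|u_{n_\d}\|.
$$
Applying (\ref{67}) yields $\|u_{n_\d}\| \lesssim K_0\|\omega\|^\beta \|T e_{n_\d}\|$, and since $K_0\|e_0\|_\mu^\beta$ (hence $K_0\|\omega\|^\beta$) is assumed sufficiently small, this term can be absorbed into the left-hand side to give $\|T e_{n_\d}\| \lesssim \d$. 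Assumption \ref{A1}(a) then translates this into $\|e_{n_\d}\|_{-a} \le m^{-1}\|T e_{n_\d}\| \lesssim \d$, which is the claim for $r=-a$.

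Finally, applying the interpolation inequality (\ref{2.1}) with endpoints $-a$ and $\mu$ delivers, for every intermediate $r \in [-a,\mu]$,
$$
\|e_{n_\d}\|_r \le \|e_{n_\d}\|_{-a}^{\frac{\mu-r}{a+\mu}} \|e_{n_\d}\|_\mu^{\frac{a+r}{a+\mu}}
\lesssim \d^{\frac{\mu-r}{a+\mu}} \|e_0\|_\mu^{\frac{a+r}{a+\mu}},
$$
which is precisely the asserted rate. The main technical labor has already been spent inside Proposition \ref{P1}, where the nonlinearity-induced error terms were controlled by induction along the Newton iterates. At this final stage the only delicate point is the absorption step that converts the discrepancy-level bound into a residual bound $\|T e_{n_\d}\| \lesssim \d$, and this is precisely where the smallness hypothesis on $K_0\|e_0\|_\mu^\beta$ is indispensable.
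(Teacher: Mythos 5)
Your proposal is correct and follows essentially the same route as the paper: both establish the endpoint bounds $\|e_{n_\d}\|_\mu\lesssim\|\omega\|$ from Proposition \ref{P1} and $\|e_{n_\d}\|_{-a}\lesssim\d$ by combining the discrepancy principle with the absorption of $\|u_{n_\d}\|$ via (\ref{67}) and Assumption \ref{A1}(a), and then conclude by the interpolation inequality (\ref{2.1}) together with (\ref{31}). The only cosmetic difference is that the paper phrases the absorption step as $\|Te_n\|\le 2\|F(x_n)-y\|$ before invoking the stopping rule, whereas you absorb after splitting off the data error; the two are identical in substance.
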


\begin{proof}
It follows from (\ref{67}) that if $K_0\|\omega\|^\beta$ is suitably small then
\begin{align*}
\|F(x_n)-y-T e_n\| \le \frac{1}{2} \|T e_n\|
\end{align*}
which implies $\|Te_n\|\le 2\|F(x_n)-y\|$ for $0\le n\le
\tilde{n}_\d$. Since Lemma \ref{L44} implies $n_\d\le \tilde{n}_\d$,
it follows from Assumption \ref{A1}(a) and the definition of $n_\d$
that
\begin{align*}
\|e_{n_\d}\|_{-a} \le \frac{1}{m}\|T e_{n_\d}\|\le
\frac{2}{m}\left(\|F(x_{n_\d})-y^\d\|+\d\right) \le
\frac{2(1+\tau)}{m}\d.
\end{align*}
But from Proposition \ref{P1} we have $\|e_{n_\d}\|_\mu\le C_*
\|\omega\|$. The desired estimate then follows from the
interpolation inequality (\ref{2.1}) and (\ref{31}). \hfill $\Box$
\end{proof}

\begin{remark}
If $F$ satisfies (\ref{A10}) and $\{x_n\}$ is defined by (\ref{2})
with $s>-a/2$, then the order optimal convergence rate holds for
$x_0-x^\dag\in X_\mu$ with $0<\mu\le a+2s$. On the other hand, if
$F'(x)$ satisfies the Lipschitz condition 
$$
\|F'(x)-F'(x^\dag)\|\le K_0\|x-x^\dag\|, \quad x\in B_\rho(x^\dag)
$$ 
and $\{x_n\}$ is defined by (\ref{2}) with $s>a/2$, then the order
optimal convergence rate holds for $x_0-x^\dag\in X_\mu$ with
$a<\mu\le 2s$.
\end{remark}

\section{\bf Examples}\label{Sect4}
\setcounter{equation}{0}

In this section we will give several important examples of $\{g_\a\}$ that satisfy Assumptions \ref{A4}
and \ref{A5}. Thus, Theorem \ref{T1} applies to the corresponding methods if $F$ satisfies
Assumption \ref{A1}  and $\{\a_n\}$ satisfies (\ref{60}).
For all these examples, the functions $g_\a$ are analytic at least in the domain
$$
D_\a:=\{z\in {\mathbb C}:  z\ne -\a, -1\}.
$$
Moreover, for each $\a>0$, we always take the closed contour
$\Gamma_\a$ to be (see \cite{BK04})
$$
\Gamma_\a=\Gamma_\a^{(1)}\cup \Gamma_\a^{(2)}\cup
\Gamma_\a^{(3)}\cup \Gamma_\a^{(4)},
$$
with
\begin{align*}
\Gamma_\a^{(1)}&:= \{ z=\frac{\a}{2} e^{i\phi}: \phi_0\le
\phi \le 2\pi-\phi_0\},\\
\Gamma_\a^{(2)}&:=\{z=R e^{i\phi}: -\phi_0\le \phi\le
\phi_0\},\\
\Gamma_\a^{(3)}&:=\{z=t e^{i\phi_0}: \a/2\le t\le R\},\\
\Gamma_\a^{(4)}&:=\{z=t e^{-i\phi_0}: \a/2\le t\le R\},
\end{align*}
where $R>\max\{1, \a\}$ and $0<\phi_0<\pi/2$ are fixed numbers.
Clearly $\Gamma_\a\subset D_\a$ and $[0, 1]$ lies inside
$\Gamma_\a$. It is straightforward to check that (\ref{2.6}) is satisfied.

\begin{example} We first consider for $\a>0$ the function
$g_\alpha$ given by
$$
g_\alpha(\lambda)
=\frac{(\a+\la)^N-\a^N}{\la (\a+\la)^N}
$$
where $N\ge 1$ is a fixed integer. This function arises from the
iterated Tikhonov regularization of order $N$ for linear ill-posed
problems. The corresponding method (\ref{2}) becomes
\begin{align*}
u_{n,0}&=x_n,\\
u_{n, l+1}&=u_{n,l}-\left(\a_n L^{2s} +T_n^* T_n\right)^{-1} T_n^*
\left(F(x_n)-y^\d -T_n(x_n-u_{n,l})\right), \\
&\qquad\qquad\qquad\qquad\qquad\qquad\qquad\qquad\qquad \quad l=0, \cdots, N-1,\\
x_{n+1}&= u_{n, N},
\end{align*}
where $T_n:=F'(x_n)$. When $N=1$, this is the Levenberg-Marquardt method in Hilbert scales.
The corresponding residual function is
$r_\alpha(\lambda)=\alpha^N (\alpha+\lambda)^{-N}$.
In order to verify Assumption \ref{A5}, we recall the inequality (see \cite[Lemma 3]{Jin10a})
$$
\la \prod_{k=j}^n \frac{\a_k}{\a_k+\la} \le (s_n-s_{j-1})^{-1} \quad \mbox{for all } \la\ge 0.
$$
Then for $0\le \nu\le 1$ and $\la\ge 0$ we have
$$
\la^\nu \prod_{k=j}^n r_{\a_k}(\la) \le \left(\la \prod_{k=j}^n \frac{\a_k}{\a_k+\la}\right)^\nu
\le (s_n-s_{j-1})^{-\nu}
$$
and
\begin{align*}
\la^\nu g_{\a_j}(\la) \prod_{k=j+1}^n r_{\a_k}(\la)
&=\frac{(\a_j+\la)^N-\a_j^N}{\a_j^N \la^{1-\nu}}
\prod_{k=j}^n \left(\frac{\a_k}{\a_k+\la}\right)^N\\
&=\sum_{l=0}^{N-1}\left(\begin{array}{ccc}
N\\
l
\end{array}\right) \a_j^{l-N} \la^{N+\nu-l-1}
\prod_{k=j}^n \left(\frac{\a_k}{\a_k+\la}\right)^N\\
&\le \sum_{l=0}^{N-1}\left(\begin{array}{ccc}
N\\
l
\end{array}\right) \a_j^{l-N} \left(\la
\prod_{k=j}^n \frac{\a_k}{\a_k+\la}\right)^{N+\nu-l-1}\\
&\le \sum_{l=0}^{N-1}\left(\begin{array}{ccc}
N\\
l
\end{array}\right) \a_j^{l-N} (s_n-s_{j-1})^{-N-\nu+l+1} \\
&\le C_N \frac{1}{\a_j} (s_n-s_{j-1})^{-\nu},
\end{align*}
where $C_N=2^N-1$ and we used the fact $\a_j^{-1}\le s_n-s_{j-1}$.
We therefore obtain (\ref{g1}) and (\ref{g2}) in Assumption \ref{A5}.

Next we will verify (\ref{2.8}) in Assumption \ref{A4}. Note that
$$
\varphi_\a(z)=\frac{\a(\a+z)^{N-1}-\a^N}{z (\a+z)^N}=\frac{1}{z (\a+ z)^N} \sum_{j=0}^{N-2}
\left(\begin{array}{ccc}
N-1\\
j
\end{array}\right) \a^{j+1} z^{N-1-j}.
$$
It is easy to check $|\varphi_\a(z)|\lesssim \a^{-1}$
on $\Gamma_\a^{(1)}$ and $|\varphi_\a(z)|\lesssim 1$ on $\Gamma_\a^{(2)}$.
Moreover, on $\Gamma_\a^{(3)}\cup \Gamma_\a^{(4)}$ there holds
\begin{align*}
|\varphi_\a(z)|\lesssim \frac{1}{t(\a+t)^N} \sum_{j=0}^{N-2} \a^{j+1} t^{N-1-j}
\lesssim \sum_{j=0}^{N-2} \a^{j+1} t^{-2-j}.
\end{align*}
Therefore
\begin{align*}
\int_{\Gamma_\a} |\varphi_\a(z)| |dz|
&= \int_{\Gamma_\a^{(1)}} |\varphi_\a(z)| |dz| +\int_{\Gamma_\a^{(2)}} |\varphi_\a(z)| |dz|
+\int_{\Gamma_\a^{(3)}\cup \Gamma_\a^{(4)}} |\varphi_\a(z)| |dz|\\
&\lesssim \a^{-1} \int_{\phi_0}^{2\pi-\phi_0} \a d \phi +\int_{-\phi_0}^{\phi_0} d\phi
+\sum_{j=0}^{N-2} \a^{j+1} \int_{\a/2}^R t^{-2-j} dt\\
&\lesssim 1.
\end{align*}
Assumption \ref{A4} is therefore verified.
\end{example}

\begin{example} We consider the method (\ref{2}) with $g_\a$ given by
$$
g_\alpha(\lambda)=\frac{1}{\lambda}\left(1-e^{-\lambda/\alpha}\right)
$$
which arises from the asymptotic regularization for linear
ill-posed problems. In this method, the iterative sequence $\{x_n\}$ is
equivalently defined as $x_{n+1}:=x(1/\a_n)$, where $x(t)$ is the unique solution of
the initial value problem
\begin{align*}
&\frac{d}{d t} x(t)=L^{-2s} F'(x_n)^* \left(y^\d-F(x_n)+F'(x_n)(x_n-x(t))\right), \quad t>0,\\
&x(0)=x_n.
\end{align*}
The corresponding residual function is
$r_\alpha(\lambda)=e^{-\lambda/\alpha}$. We first verify Assumption \ref{A5}. It is easy to see
$$
\la^\nu  \prod_{k=j}^n r_{\a_j}(\la) =\la^\nu e^{-\la (s_n-s_{j-1})}
\le \nu^\nu e^{-\nu} (s_n-s_{j-1})^{-\nu}\le (s_n-s_{j-1})^{-\nu}
$$
for $0\le \nu\le 1$ and $\la\ge 0$. This shows (\ref{g1}). By using the elementary inequality
$e^{-p \la} -e^{-q \la} \le (q-p)/q$ for $0<p\le q$ and $\la\ge 0$
and observing that $0\le r_\a(\la)\le 1$ and $0\le g_\a(\la)\le 1/\a$, we have for $0\le \nu\le 1$
and $\la\ge 0$ that
\begin{align*}
\la^\nu g_{\a_j}(\la) \prod_{k=j+1}^n r_{\a_k}(\la)
&\le \frac{1}{\a_j^{1-\nu}} \left(\la g_{\a_j}(\la) \prod_{k=j+1}^n r_{\a_k}(\la)\right)^\nu \\
&= \frac{1}{\a_j^{1-\nu}} \left( e^{-(s_n-s_j)\la} -e^{-(s_n-s_{j-1})\la}\right)^\nu\\
&\le \frac{1}{\a_j} (s_n-s_{j-1})^{-\nu}
\end{align*}
which gives (\ref{g2}).

In order to verify (\ref{2.8}) in Assumption \ref{A4}, we note that
$$
\varphi_\a(z)=\frac{1-e^{-z/\a}}{z}-\frac{1}{\a+z}=\frac{\a-(\a+z) e^{-z/\a}}{z(\a+z)}.
$$
It is easy to see that $|\varphi_\a(z)|\lesssim \a^{-1}$ on $\Gamma_\a^{(1)}$,
$|\varphi_\a(z)|\lesssim 1$ on $\Gamma_\a^{(2)}$ and
$$
|\varphi_\a(z)|\lesssim \frac{\a+(\a+t) e^{-\frac{t}{\a}\cos \phi_0}}{t(\a+t)}\lesssim \a t^{-2}
$$
on $\Gamma_\a^{(3)}\cup \Gamma_\a^{(4)}$. Therefore
$$
\int_{\Gamma_\a} |\varphi_\a(z)| |dz|\lesssim 1+ \int_{\a/2}^R \a t^{-2} dt \lesssim 1.
$$
\end{example}

\begin{example}\label{E3}
We consider for $0<\a\le 1$ the function $g_\alpha$ given by
$$
g_\alpha(\lambda)=\sum_{l=0}^{[1/\alpha]-1}(1-\lambda)^l=\frac{1-(1-\la)^{[1/\a]}}{\la}
$$
which arises from the linear Landweber iteration, where $[1/\a]$ denotes the largest integer not
greater than $1/\a$. The method (\ref{2}) then becomes
\begin{align*}
u_{n,0}&=x_n,\\
u_{n, l+1}&=u_{n,l}-L^{-2s} T_n^* \left(F(x_n)-y^\d -T_n(x_n-u_{n,l})\right), \quad 0\le l\le [1/\a_n]-1,\\
x_{n+1}&= u_{n, [1/\a_n]},
\end{align*}
where $T_n:=F'(x_n)$. When $\a_n=1$ for all $n$, this method reduces
to the Landweber iteration in Hilbert scales proposed in
\cite{N2000}. The corresponding residual function is
$r_\alpha(\lambda)=(1-\lambda)^{[1/\alpha]}$. We first verify
Assumption \ref{A5} when the sequence $\{\a_n\}$ is given by
$\a_n=1/k_n$ for some integers $k_n\ge 1$. Then for $0\le \nu\le 1$
and $0\le \la\le 1$ we have
$$
\la^\nu \prod_{k=j}^n r_{\a_k}(\la)= \la^\nu (1-\la)^{s_n-s_{j-1}}
\le \nu^\nu (s_n-s_{j-1})^{-\nu}\le (s_n-s_{j-1})^{-\nu}.
$$
We thus obtain (\ref{g1}). Observing that $0\le r_{\a_j}(\la)\le 1$ and $0\le g_{\a_j}(\la)\le 1/\a_j$
for $0\le \la \le 1$, we have
\begin{align*}
\la^\nu g_{\a_j}(\la) \prod_{k=j+1}^n r_{\a_k}(\la)
&\le \frac{1}{\a_j^{1-\nu}} \left(\la g_{\a_j}(\la) \prod_{k=j+1}^n r_{\a_k}(\la)\right)^\nu\\
&=\frac{1}{\a_j^{1-\nu}} \left((1-\la)^{s_n-s_j}-(1-\la)^{s_n-s_{j-1}}\right)^\nu.
\end{align*}
Thus, (\ref{g2}) follows from the elementary inequality
$t^p-t^q \le (q-p)/q$ for $0<p\le q$ and $0\le t \le 1$.

In order to verify (\ref{2.8}) in Assumption \ref{A4},
in the definition of $\Gamma_\a$ we pick $R>1$ and $0<\phi_0<\pi/2$
such that $R<2\cos\phi_0$. Note that
$$
\varphi_\a(z)=\frac{1-(1-z)^{[1/\a]}}{z}-\frac{1}{\a+z}=\frac{\a-(\a+z)(1-z)^{[1/\a]}}{z(\a+z)}.
$$
By using the fact $(1+\a)^{1/\a}\le e$ we can see
$$
|\varphi_\a(z)|\lesssim  \a^{-1} (1+\a/2)^{1/\a}\lesssim \a^{-1} \quad \mbox{on }  \Gamma_\a^{(1)}.
$$
According to the choice of $R$ and $\phi_0$, we have $1+R^2-2R \cos\phi_0<1$. Thus
$$
|\varphi_\a(z)|\lesssim \frac{\a+(\a+R) (1+R^2-2 R\cos \phi_0)^{[1/\a]/2}}{R(R+\a)} \lesssim 1 \quad
\mbox{on } \Gamma_{\a}^{(2)}.
$$
Furthermore, on  $\Gamma_\a^{(3)}\cup \Gamma_\a^{(4)}$ we have
$$
|\varphi_\a(z)|\lesssim \frac{\a+(\a+t)(1+t^2-2 t\cos \phi_0)^{1/(2\a)}}{t(\a+t)}.
$$
Therefore
\begin{align*}
\int_{\Gamma_\a} |\varphi_\a(z)| |dz| &\lesssim 1+ \int_{\a/2}^R
\frac{\a+(\a+t)(1+t^2-2 t\cos \phi_0)^{1/(2\a)}}{t(\a+t)} dt\\
& = 1+ \int_{1/2}^{R/\a} \frac{1+(1+t)(1+\a^2 t^2 -2\a t \cos\phi_0)^{1/(2\a)}}{t(1+t)} dt\\
&\lesssim 1+\int_{1/2}^{R/\a} (1+\a^2 t^2 -2\a t\cos\phi_0)^{1/(2\a)} dt.
\end{align*}
Observe that for $1/2\le t\le R/\a$ there holds
$$
(1+\a^2 t^2 -2\a t\cos\phi_0)^{1/(2\a)}\le (1-\mu_0 \a t)^{1/(2\a)}\le e^{-\mu_0 t/2}
$$
with $\mu_0:=2\cos \phi_0-R>0$.  Thus
$$
\int_{\Gamma_\a} |\varphi_\a(z)| |d z| \lesssim 1+\int_{1/2}^\infty e^{-\mu_0 t/2} dt\lesssim 1.
$$
\end{example}

\begin{example}
We consider for $0<\a\le 1$ the function $g_\alpha$ given by
$$
g_\alpha(\lambda)=\sum_{i=1}^{[1/\alpha]}(1+\lambda)^{-i}=\frac{1-(1+\la)^{-[1/\a]}}{\la}
$$
which arises from the Lardy method for linear inverse problems. Then the method (\ref{2})
becomes
\begin{align*}
u_{n,0}&=x_n,\\
u_{n, l+1}&=u_{n,l}-(L^{2s}+T_n^* T_n)^{-1} T_n^* \left(F(x_n)-y^\d -T_n(x_n-u_{n,l})\right),\\
&\qquad\qquad\qquad\qquad\qquad\qquad\qquad\qquad l=0, \cdots, [1/\a_n]-1,\\
x_{n+1}&= u_{n, [1/\a_n]},
\end{align*}
where $T_n=F'(x_n)$. The residual function is
$r_\a(\la)=(1+\la)^{-[1/\a]}$. Assumption \ref{A4} and Assumption \ref{A5}
can be verified  similarly as in Example \ref{E3} when the sequence
$\{\a_n\}$ is given by $\a_n=1/k_n$ for some integers $k_n\ge 1$.
\end{example}




\end{document}